\documentclass{amsart}
\usepackage{amsmath}
\usepackage{amsfonts}
\usepackage{amssymb}
\usepackage{mathtools}
\usepackage{thmtools}
\usepackage{thm-restate}
\usepackage{epsfig}
\usepackage{tikz}
\usepackage{multirow}
\usepackage{url}
\usepackage{array}
\usepackage{float}
\usepackage{tikz-cd}
\usepackage{hyperref}

\newtheorem{theorem}{Theorem}[section]
\newtheorem{prop}[theorem]{Proposition}
\newtheorem{lemma}[theorem]{Lemma}
\newtheorem{cor}[theorem]{Corollary}

\theoremstyle{definition}
\newtheorem{rem}[theorem]{Remark}
\newtheorem{defi}[theorem]{Definition}
\newtheorem{example}[theorem]{Example}

\DeclarePairedDelimiter{\ceil}{\lceil}{\rceil}

\newcommand{\ra}{\rightarrow}
\newcommand{\IP}{\mathbb{P}}
\newcommand{\IC }{\mathbb{C}}
\newcommand{\IR}{\mathbb{R}}

\newcommand{\IZ}{\mathbb{Z}}
\newcommand{\IQ}{\mathbb{Q}}
\newcommand{\IN}{\mathbb{N}}

\newcommand{\coloneqq}{:=}

\newcommand{\spinn}{\mathrm{sn}_{\mathbb{R}}}

\newcommand{\hskt}{S^{\left[3\right]}}

\newcommand{\hskn}{S^{\left[n\right]}}

\newcommand{\sigman}{\Sigma^{\left[n\right]}}
\newcommand{\pnine}{P_{8t}(9)}
\newcommand{\ptwelve}{P_{8t}(12)}

\newcommand{\pone}{P_{8t}(1)}

\DeclareMathOperator{\Pic}{Pic}

\DeclareMathOperator{\id}{id}

\DeclareMathOperator{\divi}{div}

\DeclareMathOperator{\Mo}{Mon}
\DeclareMathOperator{\aut}{Aut}
\DeclareMathOperator{\bir}{Bir}

\DeclareMathOperator{\ns}{NS}

\DeclareMathOperator{\pic}{Pic}

\DeclareMathOperator{\Sym}{Sym}
\DeclareMathOperator{\trans}{Tr}
\DeclareMathOperator{\movb}{Mov}
\DeclareMathOperator{\nef}{Nef}

\makeatletter
\def\blfootnote{\xdef\@thefnmark{}\@footnotetext}

\begin{document}

\title[On birational transformations of Hilbert schemes of points on K3's]{On birational transformations of Hilbert schemes of points on K3 surfaces}

\author[P.\ Beri]{Pietro Beri}
\address{Pietro Beri, Laboratoire de Math\'ematiques et Applications, UMR CNRS 7348, Universit\'e de Poitiers, T\'el\'eport 2, Boulevard Marie et Pierre Curie, 86962 Futuroscope Chasseneuil Cedex, France.}
\email{pietro.beri@math.univ-poitiers.fr}

\author[A.\ Cattaneo]{Alberto Cattaneo}

\address{Alberto Cattaneo, Mathematisches Institut and Hausdorff Center for Mathematics, Universit\"at Bonn, Endenicher Allee 60, 53115 Bonn, Germany; Max Planck Institute for Mathematics, Vivatsgasse 7, 53111 Bonn, Germany.}
\email{cattaneo@math.uni-bonn.de}

\maketitle

\blfootnote {{\it 2020 Mathematics Subject Classification:} 14J50, 14C05, 14C34, 32G13.} \blfootnote {{\it Key words and phrases:} Irreducible holomorphic symplectic manifolds, Hilbert schemes of points on surfaces, birational equivalence, automorphisms, cones of divisors.}

\begin{abstract}
We classify the group of birational automorphisms of Hilbert schemes of points on algebraic K3 surfaces of Picard rank one. We study whether these automorphisms are symplectic or non-symplectic and if there exists a hyperk\"ahler birational model on which they become biregular. We also present new geometrical constructions for some of these automorphisms.
\end{abstract}

\section{Introduction} \label{sec: intro}

Hilbert schemes of points on smooth complex K3 surfaces are classical examples of irreducible holomorphic symplectic (ihs) manifolds, i.e.\ smooth compact simply connected K\"ahler manifolds with a unique (up to scalar) holomorphic two-form, which is everywhere non-degenerate.
The second integral cohomology group of ihs manifolds admits a lattice structure, provided by the Beauville-Bogomolov-Fujiki (BBF) quadratic form. The global Torelli theorem for K3 surfaces has been generalized to ihs manifolds (\cite{verb, huybr_torelli, markman}), albeit in a weaker form, making possible to investigate automorphisms of these manifolds by studying their action on the BBF lattice. 

For $t \in \IN$ consider a complex algebraic K3 surface $S$ whose Picard group is generated by an ample line bundle $H$ with $H^2 = 2t$, i.e.\ a very general element of the $19$-dimensional space of $2t$-polarized K3 surfaces. A classification of the group of biregular automorphisms of $\hskn \coloneqq \text{Hilb}^n(S)$ has been given by Boissi{\`e}re, An.\ Cattaneo, Nieper-Wi{\ss}kirchen and Sarti \cite{bcnws} in the case $n=2$, and by the second author for all $n \geq 3$  \cite{catt_autom_hilb}. In particular, $\aut(\hskn)$ is either trivial or generated by an involution which is non-symplectic, i.e.\ it acts as $-1$ on the generator of $H^{2,0}(\hskn)$. %If $t \neq 1$, the involution is not induced by an involution of the surface $S$ (we say that it is non-natural).
The results of \cite{bcnws} and \cite{catt_autom_hilb} provide explicit numerical conditions on the parameters $n$ and $t$ to determine whether $\aut(\hskn) = \left\{ \id \right\}$ or $\aut(\hskn) \cong \IZ / 2\IZ$. The first aim of the paper is to give a similar classification for the group $\bir(\hskn)$ of birational automorphisms of the Hilbert scheme, thus generalizing the results of Debarre and Macr\`i \cite{debarre_macri, debarre} for $n=2$. 
We do this in Section \ref{sec: classification}, by combining %the description of the movable cone of $S^{[n]}$ 
%due to Bayer and Macr\`i \cite{bayer_macri_mmp} 
%with Markman's results  on monodromy operators \cite{markman}. 
Markman's results on monodromy operators and the chamber decomposition of the movable cone for
ihs manifolds \cite{markman} with the computation of the extremal rays of the movable cone of $S^{[n]}$ due to Bayer and Macr\`i \cite{bayer_macri_mmp}.
These computation translate naturally in the language of Pell's equations, whose basic theory is recalled in Appendix \ref{sec: prelim}. For $\sigma \in \bir(\hskn)$ let $\sigma^* \in O(H^2(\hskn,\IZ))$ be its pullback action on the BBF lattice. We set $H^2(\hskn,\IZ)^{\sigma^*} \coloneqq \left\{ v \in H^2(\hskn,\IZ) \mid \sigma^*(v) = v \right\}$ and $H^2(\hskn,\IZ)_{\sigma^*} \coloneqq (H^2(\hskn,\IZ)^{\sigma^*})^\perp$. The classification of $\bir(\hskn)$ is as follows.

\begin{theorem}\label{thm: iff}
Let $S$ be an algebraic K3 surface with $\pic(S) = \IZ H$, $H^2 = 2t$ and $n \geq 2$.

If $t \geq 2$, there exists a non-trivial birational automorphism $\sigma: \hskn \dashrightarrow \hskn$ if and only if 
$t(n-1)$ is not a square and the minimal solution $(X,Y) = (z,w)$ of Pell's equation $X^2 - t(n-1)Y^2 = 1$ with $z \equiv \pm 1 \pmod{n-1}$ satisfies $w \equiv 0 \pmod{2}$ and $(z, z) \equiv (j, k) \in \frac{\IZ}{2(n-1)\IZ} \times \frac{\IZ}{2t \IZ}$ with $(j, k) \in \left\{ (1,1), (1,-1), (-1,-1)\right\}$. If so, $\sigma$ generates $\bir(\hskn) \cong \IZ/2\IZ$ and
\begin{itemize}
\item if $(j,k) = (1,-1)$, then $\sigma$ is non-symplectic with $H^2(\hskn,\IZ)^{\sigma^*} \cong \langle 2\rangle$;
\item if $(j,k) = (-1,-1)$, then $\sigma$ is non-symplectic with $H^2(\hskn,\IZ)^{\sigma^*} \cong \langle 2(n-1)\rangle$;
\item if $(j,k) = (1,1)$, then $\sigma$ is symplectic with $H^2(\hskn,\IZ)_{\sigma^*} \cong \langle -2(n-1)\rangle$.
\end{itemize}

If $t = 1$, let $(X,Y) = (a,b)$ be the integer solution of $(n-1)X^2 - Y^2 = -1$ with smallest $a, b > 0$. If $n-1$ is a square or $b \equiv \pm 1 \pmod{n-1}$, then $\bir(\hskn) = \aut(\hskn) \cong \IZ/2\IZ$. Otherwise $n \geq 9$, $\bir(\hskn) \cong \IZ/2\IZ \times \IZ/2\IZ$ and $\aut(\hskn) \cong \IZ/2\IZ$.
\end{theorem}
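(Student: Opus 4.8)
The plan is to translate the statement into lattice theory via the Hodge-theoretic Torelli theorem for ihs manifolds. Since $\pic(S)=\IZ H$, the N\'eron--Severi lattice of $\hskn$ is $\ns(\hskn)=\IZ H\oplus\IZ\delta\cong\langle 2t\rangle\oplus\langle -2(n-1)\rangle$, where $2\delta$ is the exceptional divisor of the Hilbert--Chow morphism, and its orthogonal complement in $H^2(\hskn,\IZ)$ is the transcendental lattice $T$ of $S$. By Markman's version of the global Torelli theorem, a non-trivial birational self-map of $\hskn$ is the same datum as a non-trivial monodromy operator $g\in\Mo^2(\hskn)$ that is a Hodge isometry and preserves the movable cone $\movb(\hskn)$, and $\bir(\hskn)\to O(H^2(\hskn,\IZ))$ is injective. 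For a very general $S$ of Picard rank one any Hodge isometry acts on $T$ as $\epsilon\cdot\id$ with $\epsilon\in\{\pm1\}$, so $g$ is determined by its restriction to $\ns(\hskn)$ together with the sign $\epsilon$; the map is symplectic iff $\epsilon=+1$ and non-symplectic iff $\epsilon=-1$. This already accounts for the symplectic/non-symplectic dichotomy and reduces the computation of the invariant and coinvariant lattices to an eigenspace computation for $g\oplus\epsilon\,\id_T$.

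Next I would make $\movb(\hskn)$ explicit. As $\ns(\hskn)$ has signature $(1,1)$, the movable cone is a two-dimensional cone inside the positive cone bounded by two rays; a non-trivial isometry of $\ns(\hskn)$ preserving it is either the identity or the unique involution exchanging the two boundary rays, since hyperbolic rotations move every proper subcone. Each of these lifts in at most two ways according to $\epsilon$, so $\bir(\hskn)$ embeds into $\IZ/2\IZ\times\IZ/2\IZ$. To locate the rays I would feed $\ns(\hskn)$ into the Bayer--Macr\'i description of the movable cone, whose walls are controlled by the existence of primitive classes of square $-2$ or $0$ in the algebraic Mukai lattice; solving these wall conditions on $\langle 2t\rangle\oplus\langle -2(n-1)\rangle$ reduces, after clearing denominators, to the Pell equation $X^2-t(n-1)Y^2=1$ when $t\geq 2$, while for $t=1$ the relevant wall is instead governed by $(n-1)X^2-Y^2=-1$. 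The condition $z\equiv\pm1\pmod{n-1}$ is precisely what selects the solution giving a true boundary of the fundamental chamber, the other solutions producing reflections that carry $\movb(\hskn)$ into a neighbouring chamber (i.e.\ birational maps to other hyperk\"ahler models rather than self-maps).

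Then I would write down the candidate involution $g$ in the basis $(H,\delta)$ from the data $(z,w)$ as the reflection swapping the two boundary rays, and impose the two conditions making it a birational automorphism. Integrality of $g$ forces $w\equiv 0\pmod 2$. The requirement that $g$ lie in $\Mo^2(\hskn)$ is, by Markman, that $g$ act as $\pm\id$ on the discriminant group of $\ns(\hskn)$ and compatibly on the part glued to $T$; computing these actions in terms of $z$ yields exactly the reductions $(z\bmod 2(n-1),\,z\bmod 2t)=(j,k)$, and the orientation and gluing compatibility is what rules out the fourth pair $(-1,1)$, leaving $\{(1,1),(1,-1),(-1,-1)\}$. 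Reading off $\epsilon$ on $T$ from $(j,k)$ gives the symplectic/non-symplectic trichotomy, and computing the $\pm1$-eigenlattices of $g\oplus\epsilon\,\id_T$ produces the rank-one lattices $\langle 2\rangle$, $\langle 2(n-1)\rangle$ and $\langle -2(n-1)\rangle$ in the three cases.

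Finally, for $t=1$ I would argue separately, because the double cover $S\to\IP^2$ always supplies a natural non-symplectic involution acting as $\id$ on $\ns(\hskn)$ and $-\id$ on $T$, so that $\aut(\hskn)\cong\IZ/2\IZ$ is automatic. Here the governing equation is $(n-1)X^2-Y^2=-1$ with minimal positive solution $(a,b)$, and the chamber analysis shows that when $n-1$ is a square or $b\equiv\pm1\pmod{n-1}$ this biregular involution is the only movable-cone-preserving one, so $\bir(\hskn)=\aut(\hskn)$, whereas otherwise a second, strictly birational involution (the ray-swapping reflection) appears and $\bir(\hskn)\cong\IZ/2\IZ\times\IZ/2\IZ$; a short numerical estimate on $(a,b)$ then forces $n\geq 9$. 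I expect the main obstacle to be precisely Steps two and three: extracting the two boundary rays of $\movb(\hskn)$ from the Bayer--Macr\'i machinery with the correct congruence $z\equiv\pm1\pmod{n-1}$, and matching the integrality, orientation and discriminant constraints to the exact admissible list of pairs $(j,k)$, in particular excluding $(-1,1)$ and separating genuine automorphisms from birational maps to other models.
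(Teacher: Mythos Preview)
Your overall strategy coincides with the paper's: Markman's Hodge-theoretic Torelli theorem, the Bayer--Macr\`i description of $\movb(\hskn)$, the characterisation of $\Mo^2(\hskn)$ via spinor norm and discriminant action, and the reduction to the Pell equation $X^2-t(n-1)Y^2=1$. The structure of $\bir(\hskn)$ as a subgroup of $\IZ/2\IZ\times\IZ/2\IZ$ and the sign $\epsilon$ on $T$ governing symplecticity are also handled exactly as in the paper.

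Two of your attributions, however, are wrong and would not survive a detailed write-up; both concern the steps you yourself flag as delicate.

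\emph{Parity of $w$.} The matrix of the ray-swapping reflection on $\ns(\hskn)$, in the basis $\{h,-\delta\}$, is
\[
\begin{pmatrix} z & -(n-1)w \\ tw & -z \end{pmatrix},
\]
which is integral for every integer solution $(z,w)$ of $X^2-t(n-1)Y^2=1$; integrality imposes nothing on $w$. The condition $w\equiv 0\pmod 2$ (and simultaneously $z\equiv\pm1\pmod{2(n-1)}$) is instead a consequence of the Bayer--Macr\`i computation of the extremal ray of $\movb(\hskn)$: it reflects the fact that the second boundary ray corresponds to a Hilbert--Chow contraction, so that the orthogonal isotropic Mukai vector is integral and primitive. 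In the paper this is simply quoted from \cite[Proposition 13.1]{bayer_macri_mmp}.

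\emph{Exclusion of $(j,k)=(-1,1)$.} This is not ruled out by orientation or discriminant compatibility. If one had $z\equiv -1\pmod{2(n-1)}$ and $z\equiv 1\pmod{2t}$, the isometry $\phi$ built from the matrix above (glued with $+\id$ on $T$) would be a perfectly good orientation-preserving Hodge isometry acting as $+\id$ on $A_{H^2(\hskn,\IZ)}$, hence a monodromy operator. What goes wrong is the movable cone. By an elementary factorisation argument (Lemma~\ref{lemma: congruence}$(iv)$ in the paper), the congruences $(j,k)=(-1,1)$ force $(n-1)X^2-tY^2=1$ to be solvable when $n\geq 3$; and by Bayer--Macr\`i this means one extremal ray of $\movb(\hskn)$ is a Brill--Noether wall rather than a Hilbert--Chow wall, so $\overline{\movb(\hskn)}$ is no longer $\langle h, zh-tw\delta\rangle_{\IR_{\geq 0}}$ and the reflection you wrote down does not preserve it. Conversely, in the sufficiency direction the paper uses the same lemma to check that $(j,k)\neq(-1,1)$ guarantees $(n-1)X^2-tY^2=1$ is \emph{not} solvable, so that the movable cone does have the expected shape and the constructed reflection $\phi=-R_\nu$ or $\phi=R_\gamma$ genuinely lies in $\Mo^2_{\mathrm{Bir}}(\hskn)$.

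With these two corrections your outline becomes the paper's proof.
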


It is interesting to notice that, differently from the biregular case, birational involutions of $\hskn$ can be symplectic, for (infinitely many) suitable choices of $n, t$. In Proposition \ref{prop:families} we provide examples of sequences of degrees $t = t_k(n)$ which realize all cases in Theorem \ref{thm: iff}.
By a classical result of Saint-Donat \cite{saint-donat}, the case $t=1$ is the only one where the K3 surface $S$ has a non-trivial automorphism ($S$ is a double cover of $\mathbb{P}^2$ ramified over a sextic curve), hence $\aut(\hskn) \cong \IZ/2\IZ$ is generated by the corresponding natural (non-symplectic) involution for all $n \geq 2$. 

By \cite[\S 5.2]{markman}, the closed movable cone of a projective ihs manifold $X$ is equipped with a decomposition in K\"ahler-type chambers
\begin{equation} \label{eq: wall-and-ch}
\overline{\movb(X)} = \bigcup_g g^* \nef(X')
\end{equation}
\noindent where the union is taken over all non-isomorphic ihs birational models $g: X \dashrightarrow X'$. The chambers $g^* \nef(X')$ have pairwise disjoint interiors and are permuted by the action of any birational automorphism of $X$; biregular automorphisms are exactly those which map $\nef(X)$ to itself. If $X$ admits a birational automorphism $\sigma$, it is then natural to ask whether there exists an ihs birational model $g: X \dashrightarrow X'$ such that $g \circ \sigma \circ g^{-1}$ is biregular on $X'$. We confront this problem for $X = \hskn$ in Section \ref{subs: decomposition}, by using the explicit description of the walls between chambers coming from \cite{bayer_macri_mmp}. For a fixed $n \geq 2$, we say that a value $t \geq 1$ is \textit{$n$-irregular} if, for a $2t$-polarized K3 surface $S$ of Picard rank one, the group $\bir(S^{[n]})$ contains an involution which is not biregular on any ihs birational model of $\hskn$. We provide a numerical characterization of $n$-irregular values in Propositions \ref{prop: t=1 e id} and \ref{prop: ambiguous}. In particular, we verify that symplectic birational involutions remain strictly birational on all birational models. On the other hand, in the case of non-symplectic birational automorphisms there are only finitely many $n$-irregular $t$'s for a fixed $n \geq 2$ (Corollary \ref{cor: finite}).

Section \ref{sec: n=3} provides an in-depth analysis of the case $n=3$. We show that, when $\bir(\hskt)$ is not trivial, the number of chambers in the movable cone of $\hskt$ (i.e.\ the number of non-isomorphic ihs birational models) is either one, two, three or five. For $n=2$ it is known that the number of chambers in $\movb(S^{[2]})$ is at most three. As $n$ increases, computing an upper bound for the number of chambers becomes more and more difficult, since walls arise from the solutions of an increasing number of Pell's equations. %The new results for $n=3$ are summed up in Proposition \ref{prop: bir iff}.

In Section \ref{sec: ambig} we explain how Theorem \ref{thm: iff} can be generalized to study birational maps between Hilbert schemes of points on two distinct K3 surfaces of Picard rank one (which need to be Fourier-Mukai partners). In the literature, isomorphic Hilbert schemes of points on two non-isomorphic K3 surfaces are called (strongly) ambiguous. The main result of the section is Theorem \ref{thm: ambig}, an improved version of the criterion \cite[Theorem 2.2]{mmy} for the determination of ambiguous pairs (up to isomorphism or birational equivalence).

%In Section \ref{sec: moduli} we go back to the case of biregular involutions. For ihs manifolds of $\ktren$-type (i.e.\ deformation equivalent to Hilbert schemes of $n$ points on a K3 surface) we compare moduli spaces of polarized manifolds and moduli spaces of manifolds with an involution whose action on the second cohomology has invariant lattice of rank one. Theorem \ref{thm: moduli identification} gives a modular interpretation of the classification of non-symplectic involutions for manifolds of $\ktren$-type (see for instance \cite{CCC}), which for $n=2$ has been highlighted in \cite{bcms_n=2}. For $n \geq 3$ this requires some additional care, since we deal with moduli spaces which may be non-connected. 

Finally, in Section \ref{sec: constructions} we discuss geometrical constructions of the automorphisms arising from Theorem \ref{thm: iff}, adding some new examples to the existing literature. Finding similar constructions is a non-trivial problem, since the Torelli-like results used in the proof of Theorem \ref{thm: iff} give no insight on the geometry. %Moreover, we describe up to deformation birational involutions of $S^{[n]}$, when $S$ is a general K3 surface of degree $2((n-1)k^2 + 1)$ for some integer $k$. These involutions are obtained by deforming a combination of Beauville involutions on the Hilbert scheme of $n$ points of a specific K3 surface of Picard rank two, in such a way that the deformation path goes only through ihs manifolds which are still Hilbert schemes of points on K3's.

\textbf{Acknowledgements.} The authors are indebted to Alessandra Sarti for reading a first draft of the paper and for her precious remarks. This work has greatly benefited from discussions with Samuel Boissi{\`e}re, Chiara Camere and Georg Oberdieck. A.~C. is grateful to Max Planck Institute for Mathematics in Bonn for its hospitality and financial support. A.~C. is supported by the Deutsche Forschungsgemeinschaft (DFG, German Research Foundation) under Germany's Excellence Strategy -- GZ 2047/1, Projekt-ID 390685813.

\section{Birational automorphisms of $\hskn$} \label{sec: classification}

Let $S$ be an algebraic K3 surface with $\pic(S) = \IZ H$, $H^2 = 2t$, $t \geq 1$. For $n \geq 2$, let $\hskn$ be the Hilbert scheme of $n$ points on $S$ and $\left\{ h, -\delta \right\}$ a basis for $\ns(\hskn) \subset H^2(\hskn, \IZ)$, where $h$ is the class of the nef (not ample) line bundle induced by $H$ on $\hskn$ and $2\delta$ is the class of the exceptional divisor of the Hilbert--Chow morphism $\hskn \ra S^{(n)}$. We consider $H^2(\hskn, \IZ)$ equipped with the even lattice structure given by the Beauville--Bogomolov--Fujiki integral quadratic form \cite[Proposition 6]{beauville}. In particular, $H^2(\hskn, \IZ) \cong H^2(S, \IZ) \oplus \IZ \delta \cong U^{\oplus 3} \oplus E_8(-1)^{\oplus 2} \oplus \langle -2(n-1)\rangle$, where for an integer $d \neq 0$ we denote by $\langle d \rangle$ the rank one lattice generated by an element of square $d$.

\subsection{The action on cohomology} \label{subs: action on cohom}

By \cite[Corollary 5.2]{oguiso} the group $\bir(\hskn)$ is finite and the homomorphism $\bir(\hskn) \ra O(H^2(\hskn,\IZ)), \sigma \mapsto \sigma^*$ is injective by \cite[Proposition 10]{beauville_rmks}. Moreover, the kernel of $\Psi: \bir(\hskn) \ra O(\ns(\hskn))$, $\sigma \mapsto \sigma^*\vert_{\ns(\hskn)}$, is the subgroup of natural automorphisms, which is isomorphic to $\aut(S)$ (see \cite[Lemma 2.4]{bcnws} and notice that $\ker(\Psi) \subset \aut(\hskn)$ by the global Torelli theorem \cite[Theorem 1.3]{markman}). By \cite[\S 5]{saint-donat}, this implies that $\ker(\Psi) = \left\{ \id \right\}$ if $t \geq 2$, while for $t = 1$ we have $\ker(\Psi) = \langle \iota^{[n]} \rangle$, where $\iota$ is the covering involution which generates $\aut(S)$ and $\iota^{[n]}$ is the natural involution induced by $\iota$ on $\hskn$. Let $\movb(\hskn) \subset \ns(\hskn)_\IR$ be the movable cone of $\hskn$, i.e.\ the open cone generated by classes of divisors whose base locus has codimension at least two. It contains the ample cone $\mathcal{A}_{\hskn}$. If there exists $\sigma \in \bir(\hskn)$ non-natural, then the (non-trivial) action $\sigma^*$ on $\ns(\hskn)$ preserves the movable cone by \cite[Lemma 6.22]{markman}.

%From \cite[Proposition 13.1]{bayer_macri_mmp} we conclude that $t(n-1)$ is not a square, $(n-1)X^2 - tY^2 = 1$ has no integer solutions (if $n \neq 2$) and $\overline{\movb(\hskn)} = \langle h, zh-tw\delta\rangle_{\IR_{\geq 0}}$, where $(z,w)$ is the minimal solution of Pell's equation $P_{t(n-1)}(1)$ with $z \equiv \pm 1 \pmod{n-1}$; moreover, $z \equiv \pm 1 \pmod{2(n-1)}$ and $w \equiv 0 \pmod{2}$. As $\sigma^*(h) = zh - tw\delta$, $\sigma^*(zh - tw\delta) = h$, the isometry $\sigma^* \in O(\ns(\hskn))$ is the involution described, with respect to the basis $\left\{ h, -\delta \right\}$, by the matrix 
%\begin{equation} \label{eq: action}
% \begin{pmatrix}
%z & -(n-1)w \\ tw & -z
%\end{pmatrix}.
%\end{equation}

%Thus, assuming that there exists $\sigma \in \bir(\hskn)$ non-natural, we have $\bir(\hskn) = \langle \sigma \rangle \cong \IZ/2\IZ$ if $t \geq 2$, while $\bir(\hskn) = \langle \iota^{[n]}, \sigma \rangle \cong \IZ/2\IZ \times \IZ/2\IZ$ if $t=1$. Notice that $\sigma^*$ is the reflection of $\ns(\hskn)$ which fixes the line spanned by $(n-1)wh - (z-1)\delta$. Let $\nu  = bh - a\delta \in \ns(\hskn)$ be the primitive generator of this line with $a,b > 0$, $(a,b) = 1$.

We refer to to Appendix \ref{sec: prelim} for notation and results on Pell's equations, which we will use throughout the paper.

\begin{prop} \cite[Proposition 13.1]{bayer_macri_mmp}, \cite[Proposition 6.1]{catt_autom_hilb} \label{prop: action}
Let $S$ be an algebraic K3 surface with $\pic(S) = \IZ H$, $H^2 = 2t$, $t \geq 1$. For $n \geq 2$, let $\sigma \in \bir(\hskn)$ be a non-natural automorphism. Then $t(n-1)$ is not a square, $(n-1)X^2 - tY^2 = 1$ has no integer solutions (if $n \neq 2$) and $\overline{\movb(\hskn)} = \langle h, zh-tw\delta\rangle_{\IR_{\geq 0}}$, where $(z,w)$ is the minimal solution of Pell's equation $P_{t(n-1)}(1)$ with $z \equiv \pm 1 \pmod{n-1}$. Moreover $z \equiv \pm 1 \pmod{2(n-1)}$ and $w \equiv 0 \pmod{2}$. The isometry $\sigma^* \in O(\ns(\hskn))$ is the reflection of $\ns(\hskn)$ which fixes the line spanned by $(n-1)wh - (z-1)\delta$. With respect to the basis $\left\{ h, -\delta \right\}$ of $\ns(\hskn)$, its matrix is 
\begin{equation} \label{eq: action}
 \begin{pmatrix}
z & -(n-1)w \\ tw & -z
\end{pmatrix}.
\end{equation}
\end{prop}

Thus, assuming that there exists $\sigma \in \bir(\hskn)$ non-natural, we have $\bir(\hskn) = \langle \sigma \rangle \cong \IZ/2\IZ$ if $t \geq 2$, while $\bir(\hskn) = \langle \iota^{[n]}, \sigma \rangle \cong \IZ/2\IZ \times \IZ/2\IZ$ if $t=1$.  

In the following, for any even lattice $L$ with bilinear form $(-,-): L \times L \ra \IZ$ we denote by $A_L \coloneqq L^\vee / L$ the discriminant group equipped with the finite quadratic form $q_L: A_L \ra \IQ / 2\IZ$ induced by the quadratic form on $L$. If $A_L$ is cyclic of order $m$, we write $A_L \cong \frac{\IZ}{m \IZ} \left( \alpha \right)$ if $q_L$ takes value $\alpha \in \IQ / 2\IZ$ on a generator of $A_L$. For $g \in O(L)$ we denote by $\overline{g}$ the isometry induced by $g$ on $A_L$. In the case of the lattice $H^2(\hskn, \IZ)$ we have $A_{H^2(\hskn, \IZ)} \cong \frac{\IZ}{2(n-1) \IZ} \left( -\frac{1}{2(n-1)} \right)$. For any $x \in H^2(\hskn,\IZ)$ let $\divi(x)$ be the divisibility of $x$ in $H^2(\hskn,\IZ)$, i.e.\ the positive generator of the ideal $(x, H^2(\hskn, \IZ)) \subset \IZ$. The transcendental lattice of $\hskn$ is $\trans(\hskn) = \ns(\hskn)^\perp \subset H^2(\hskn, \IZ)$.

\begin{prop} \label{prop: transc}
Let $S$ be an algebraic K3 surface with $\pic(S) = \IZ H$, $H^2 = 2t$, $t \geq 1$. For $n \geq 2$, let $\sigma \in \bir(\hskn)$ be a non-natural automorphism and $\nu  = bh - a\delta \in \ns(\hskn)$ be the generator of the line fixed by $\sigma^*$ with $a,b > 0$, $(a,b) = 1$. Then one of the following holds:
\begin{itemize}
\item $\sigma^*\vert_{\trans(\hskn)} = -\id$ and
\begin{itemize}
\item either $\overline{\sigma^*} = -\id$ and $\nu^2 = 2$, i.e.\ $(a,b)$ is an integer solution of $(n-1)X^2 - tY^2 = -1$;
\item or $\overline{\sigma^*} = \id$, $\nu^2 = 2(n-1)$ and $\divi(\nu)=n-1$, i.e.\ $(a,\frac{b}{n-1})$ is an integer solution of $X^2 - t(n-1)Y^2 = -1$.
\end{itemize}
\item $n \geq 9$, $\sigma^*\vert_{\trans(\hskn)} = \id$, $\overline{\sigma^*} = -\id$ and $\nu^2 = 2t$, i.e.\ the minimal solution of $P_{t(n-1)}(1)$  is $(b, \frac{a}{t})$ and $b \not \equiv \pm 1 \pmod{n-1}$.
\end{itemize}
\end{prop}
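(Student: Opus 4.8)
The plan is to analyze the involution $\sigma^* \in O(H^2(\hskn,\IZ))$ through its restriction to the transcendental lattice $\trans(\hskn)$ and its induced action $\overline{\sigma^*}$ on the discriminant group $A_{H^2(\hskn,\IZ)} \cong \frac{\IZ}{2(n-1)\IZ}$. Since $S$ has Picard rank one, $\trans(\hskn)$ has rank $21$ and the transcendental Hodge structure is irreducible, so by the Torelli-type constraint any Hodge isometry must act as $\pm \id$ on $\trans(\hskn)$; the main dichotomy in the statement corresponds exactly to these two cases. First I would establish this irreducibility claim, which forces $\sigma^*\vert_{\trans(\hskn)} = \pm\id$, and then use the fact that an isometry of $H^2(\hskn,\IZ)$ induces a \emph{well-defined} isometry on the discriminant group which must be compatible with the restrictions to both $\ns(\hskn)$ and $\trans(\hskn)$. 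The key gluing principle is that $\overline{\sigma^*}$ computed via the $\ns$-part must agree with $\overline{\sigma^*}$ computed via the $\trans$-part, since the discriminant forms of $\ns(\hskn)$ and $\trans(\hskn)$ are anti-isometric and the two induced actions on $A_{\ns(\hskn)} \cong A_{\trans(\hskn)}$ must coincide.

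Next I would compute $\overline{\sigma^*}$ explicitly from the matrix \eqref{eq: action} acting on $\ns(\hskn)$. The discriminant group $A_{\ns(\hskn)}$ is cyclic of order $2(n-1)$, generated by (the class of) $\frac{\delta}{2(n-1)}$ or equivalently by a suitable dual vector; I would track how $\sigma^*$ acts on this generator modulo $\ns(\hskn)$. The reflection $\sigma^*$ fixes the line spanned by $\nu = bh - a\delta$ and acts as $-1$ on its orthogonal complement in $\ns(\hskn)$, so $\overline{\sigma^*}$ is determined by whether $\nu$ (or its orthogonal) is responsible for the generator of the discriminant group. Concretely, $\overline{\sigma^*} = \id$ or $-\id$ according to the divisibility $\divi(\nu)$ and the square $\nu^2$: the three listed cases correspond to $(\nu^2, \divi(\nu)) = (2, 1)$, $(2(n-1), n-1)$, and $(2t, t)$ respectively. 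I would verify that these are the only primitive vectors (up to sign) that can be fixed by a reflection realizing an \emph{integral} isometry, using that $\nu^2 \cdot (\nu^\perp)^2$-type constraints together with the lattice being $\ns(\hskn) = \langle h, -\delta\rangle$ with Gram matrix $\mathrm{diag}(2t, -2(n-1))$ pin down $\nu^2 \in \{\pm 2, \pm 2t, \pm 2(n-1)\}$ and then positivity $\nu^2 > 0$ (since $\nu$ lies in the positive cone direction) selects the admissible values.

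Then I would translate each case into the stated Pell equation. Writing $\nu = bh - a\delta$ gives $\nu^2 = 2tb^2 - 2(n-1)a^2$, so $\nu^2 = 2$ yields $tb^2 - (n-1)a^2 = 1$, i.e.\ $(a,b)$ solves $(n-1)X^2 - tY^2 = -1$; the case $\nu^2 = 2(n-1)$ with $\divi(\nu) = n-1$ forces $b = (n-1)b'$ and reduces, after simplification, to $a^2 - t(n-1)(b')^2 = -1$; and $\nu^2 = 2t$ similarly gives $b^2 - t(n-1)(a/t)^2 = 1$ with $a = t a'$, so $(b, a')$ is the minimal solution of the standard Pell equation. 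The divisibility and discriminant conditions, combined with Lemma \ref{lemma: congruence}, single out which congruence class of the minimal solution $(z,w)$ produces each case, and in particular the third case requires the minimal solution to violate $b \equiv \pm 1 \pmod{n-1}$, which by \cite[Proposition 13.1]{bayer_macri_mmp} and the rank bound can only occur when $n \geq 9$. The hard part will be the bookkeeping in the discriminant-group computation: correctly identifying the generator of $A_{\ns(\hskn)}$ and computing $\overline{\sigma^*}$ on it from \eqref{eq: action} requires care, because the naive image of $\delta$ under $\sigma^*$ is not primitive-dual and one must reduce modulo the lattice while tracking the quadratic form value in $\IQ/2\IZ$. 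I would isolate this as a short computational lemma, matching the output against the anti-isometry $A_{\ns(\hskn)} \cong A_{\trans(\hskn)}$ to determine the sign on $\trans(\hskn)$, and thereby close the logical loop between $\sigma^*\vert_{\trans(\hskn)}$, $\overline{\sigma^*}$, and the arithmetic of $\nu$.
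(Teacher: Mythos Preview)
Your overall strategy is close to the paper's, but there is a genuine lattice-theoretic error in your gluing step that would make the argument fail as written.

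You assert that $A_{\ns(\hskn)}$ and $A_{\trans(\hskn)}$ are anti-isometric, and use this to link $\overline{\sigma^*}$ on $A_{H^2(\hskn,\IZ)}$ with $\sigma^*\vert_{\trans(\hskn)}$. This is false: the anti-isometry $A_{\ns} \cong A_{\trans}$ holds only when the ambient lattice is unimodular, whereas $H^2(\hskn,\IZ)$ has discriminant $2(n-1)$. Here $|A_{\ns(\hskn)}| = 4t(n-1)$ while $|A_{\trans(\hskn)}| = 2t$ (since $\trans(\hskn) \cong \trans(S)$). The gluing subgroup $H^2(\hskn,\IZ)/(\ns\oplus\trans)$ has order $2t$ and projects isomorphically onto $A_{\trans(\hskn)}$, but only onto the $\IZ/2t\IZ$-summand of $A_{\ns(\hskn)}$ generated by $[h/2t]$. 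Consequently, gluing with $\pm\id$ on $\trans(\hskn)$ yields only the congruence $z \equiv \pm 1 \pmod{2t}$; it gives no information about $z \pmod{2(n-1)}$, hence no information about $\overline{\sigma^*}$ on $A_{H^2(\hskn,\IZ)}$ (which is generated by $[\delta/(2(n-1))]$ and acted on by $-z$).

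The paper fills this gap by invoking a separate input: $\sigma^*$ is a monodromy operator (Markman's global Torelli), hence by \cite[Lemma 9.2]{markman} it acts on $A_{H^2(\hskn,\IZ)}$ as $\pm\id$, forcing $z \equiv \pm 1 \pmod{2(n-1)}$. With both congruences in hand, the paper then reads off the four sign combinations, excludes $(j,k) = (-1,1)$ via the movable-cone hypothesis, and uses Lemma~\ref{lemma: congruence} to translate each remaining case into the solvability of the appropriate Pell equation, computing $\nu$ and $\nu^2$ explicitly (the non-symplectic cases are delegated to \cite[Proposition 5.1, Lemma 6.3]{catt_autom_hilb}). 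Your plan to pin down $\nu^2 \in \{2, 2(n-1), 2t\}$ from abstract ``$\nu^2 \cdot (\nu^\perp)^2$-type constraints'' is vague and would in practice have to reduce to the same congruence analysis. Finally, the bound $n \geq 9$ in the symplectic case is not a ``rank bound'': it comes from the elementary fact that $u^2 \equiv 1 \pmod{n-1}$ forces $u \equiv \pm 1 \pmod{n-1}$ whenever $n-1 \leq 7$.
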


\begin{proof}
Let $(z,w)$ be the minimal solution of $P_{t(n-1)}(1)$ with $z \equiv \pm 1 \pmod{n-1}$. Then, as recalled in Proposition \ref{prop: action}, $z \equiv \pm 1 \pmod{2(n-1)}$ and $w \equiv 0 \pmod{2}$. By \cite[Theorem 1.3]{markman} $\sigma^*$ is a monodromy operator hence it acts on $A_{H^2(\hskn,\IZ)}$ as $\pm \id$ by \cite[Lemma 9.2]{markman}. Moreover, since $\trans(\hskn) \cong \trans(S)$ has odd rank, $\sigma^*\vert_{\trans(\hskn)} = \pm \id$ (because $\sigma^*$ is a Hodge isometry; see \cite[Corollary 3.3.5]{huybrechts}). If $t \neq 1$, by imposing that $\sigma^*\vert_{\trans(\hskn)}$ glues with the isometry $\sigma^*\vert_{\ns(\hskn)}$ of the form \eqref{eq: action}, we conclude (by \cite[Corollary 1.5.2]{nikulin}) that $\sigma^*\vert_{\trans(\hskn)} = \id$ if and only if $z \equiv 1 \pmod{2t}$ and $\sigma^*\vert_{\trans(\hskn)} = -\id$ if and only if $z \equiv -1 \pmod{2t}$. On the other hand $\overline{\sigma^*} = \id$ if and only if $z \equiv -1 \pmod{2(n-1)}$ and $\overline{\sigma^*} = -\id$ if and only if $z \equiv 1 \pmod{2(n-1)}$, for all $t \geq 1$ (see \cite[Remark 5.2]{catt_autom_hilb}).

If $\sigma^*$ acts as $-\id$ on $\trans(\hskn)$ and $t \geq 2$, then the statement follows as in the proof of \cite[Proposition 5.1]{catt_autom_hilb} and by \cite[Lemma 6.3]{catt_autom_hilb}. On the other hand, assume that $z \equiv 1 \pmod{2t}$ (hence, $\sigma^*\vert_{\trans(\hskn)} = \id$ or $t = 1$).
By Lemma \ref{lemma: congruence} and the fact that $(n-1)X^2 - tY^2 = 1$ has no integer solutions (Proposition \ref{prop: action})  we have $z \equiv 1 \pmod{2(n-1)}$, i.e.\ $\sigma^*$ acts as $-\id$ on the discriminant group of $H^2(\hskn,\IZ)$. Moreover $(z,w) = (2t(n-1)s^2+1,2us)$, where $(u,s)$ is the minimal solution of $P_{t(n-1)}(1)$. The axis of the reflection $\sigma^*\vert_{\ns(\hskn)}$ is spanned by $$(n-1)wh - (z-1)\delta = 2s(n-1)\left( uh - ts\delta \right).$$
As $\gcd(u, ts) = 1$ we conclude that $\nu = uh - ts\delta$, whose square is $2t$. The minimal solution $(u,s)$ of $P_{t(n-1)}(1)$ can have $u \not \equiv \pm 1 \pmod{n-1}$ only if $n \geq 9$. 
\end{proof}

\begin{rem}\label{rem: sympl}
Let $\omega$ be a generator of $H^{2,0}(\hskn)$. An automorphism $\sigma \in \bir(\hskn)$ is symplectic (i.e.\ $\sigma^* \omega = \omega$) if and only if $\sigma^*\vert_{\trans(\hskn)} = \id$. As a consequence of \cite[Lemma 3.5, Corollary 5.1]{mongardi_towards} (see also \cite[Proposition 4.1]{catt_autom_hilb}) any symplectic non-trivial birational automorphism of $\hskn$ is not biregular. We will return to this point in Section \ref{subs: decomposition}.
\end{rem}

\subsection{Classification of birational automorphisms}

Let $L$ be an even lattice and $l \in L$ such that $(l,l) \neq 0$. We define the reflection
\begin{equation*}
R_{l}: L_\IR \rightarrow L_\IR, \quad m \mapsto m - 2\frac{(m,l)}{(l,l)}l.
\end{equation*}
This map restricts to an isometry $R_l \in O(L)$ if and only if the divisibility of $l$ is either $\left|(l,l)\right|$ or $\left|(l,l)\right|/2$. The \emph{real spinor norm} of $L$ is the group homomorphism $\spinn^L: O(L) \rightarrow \IR^*/\left(\IR^*\right)^2 \cong \left\{ \pm 1\right\}$ defined as
 \[ \spinn^L(g) = \left( -\frac{(v_1, v_1)}{2}\right) \ldots \left( -\frac{(v_r, v_r)}{2}\right) \pmod{\left(\IR^*\right)^2}\]
 \noindent where $g_\IR = R_{v_1} \circ \ldots \circ R_{v_r}$ is the factorization of $g_\IR \in O(L_\IR)$ with respect to reflections defined by elements $v_i \in L$. In particular, if the signature of $L$ is $(l_+,l_-)$, after diagonalizing the bilinear form of $L$ over $\IR$ it is immediate to check that $\spinn^L(-\id) = (-1)^{l_+}$.

\begin{proof}[Proof of Theorem \ref{thm: iff}]
We start by assuming $t \geq 2$. By Proposition \ref{prop: action}, Proposition \ref{prop: transc} and Lemma \ref{lemma: congruence} the numerical conditions in the statement of the theorem are necessary for the existence of a birational automorphism of $\hskn$ with non-trivial action on $\ns(\hskn)$ (i.e.\ a non-trivial automorphism, since $t \neq 1$). Assume now that these conditions hold. If $n \neq 2$ the equation $(n-1)X^2 - tY^2 = 1$ has no integer solutions, since $(j,k) \neq (-1,1)$ (Lemma \ref{lemma: congruence}). We therefore have $\overline{\movb(\hskn)} = \langle h, zh - tw\delta\rangle_{\IR_{\geq 0}}$ by \cite[Proposition 13.1]{bayer_macri_mmp}. Using again Lemma \ref{lemma: congruence}, we give the following definitions depending on $(j,k)$.
\begin{itemize}
\item[($i$)] If $(j,k) = (1,-1)$, then $(n-1)a^2 - tb^2 = -1$ has integer solutions. Let $(a,b)$ be the solution with smallest $a > 0$ and set $\nu = bh - a\delta$. By Lemma \ref{lemma: congruence} we have $(z,w) = (2(n-1)a^2 + 1, 2ab)$, since we are assuming $t \geq 2$. As $\nu^2 = 2$ we can define $\phi = -R_\nu \in O(H^2(\hskn,\IZ))$. Then $\overline{\phi} = -\id \in O(A_{H^2(\hskn,\IZ)})$ by \cite[Proposition 3.1]{ghs_kodaira}.
\item[($ii$)] If $(j,k) = (-1,-1)$, then $P_{t(n-1)}(-1)$ is solvable. Let $(a,b)$ be the minimal solution and set $\nu = (n-1)bh - a\delta$. Then $(z,w) = (2t(n-1)b^2 - 1, 2ab)$. The class $\nu$ has square $2(n-1)$ and divisibility $n-1$ (see \cite[Lemma 6.3]{catt_autom_hilb}), so we can define $\phi = -R_\nu \in O(H^2(\hskn,\IZ))$ and $\overline{\phi} = \id$ by generalizing \cite[Corollary 3.4]{ghs_kodaira}.
\item[($iii$)] If $(j,k) = (1,1)$, let $(b,a)$ be the minimal solution of $P_{t(n-1)}(1)$ and set $\nu = bh - ta\delta$. Then $(z,w) = (2t(n-1)a^2+1,2ab)$. The primitive element $\gamma = (n-1)ah - b\delta$ satisfies $\gamma^2 = -2(n-1)$, while the divisibility of $\gamma$ in $H^2(\hskn,\IZ)$ is either $n-1$ or $2(n-1)$. This implies that $\phi = R_\gamma \in O(H^2(\hskn,\IZ))$ and moreover $\overline{\phi} = -\id$ by \cite[Proposition 9.12]{markman}.
\end{itemize}

%Notice that there is no ambiguity in the definitions of $\nu$ and $\phi$ when $n=2$.
By construction, $\phi\vert_{\trans(\hskn)} = - \id$ in cases ($i$), ($ii$) and $\phi\vert_{\trans(\hskn)} = \id$ in case ($iii$).
Thus $\phi$ extends to a Hodge isometry of $H^2(\hskn, \IC)$. Moreover $\phi$ is orientation-preserving. Indeed, let $\spinn \coloneqq \spinn^{H^2(\hskn,\IZ)}$; then if $\phi = -R_\nu$ we have $$\spinn(\phi) = \spinn(-\id)\spinn(R_\nu) = - \spinn(R_\nu) = \textrm{sgn}(\nu^2) = +1$$
\noindent where we used the fact that $H^2(\hskn,\IZ)$ has signature $(3,20)$. If $\phi = R_\gamma$, then $\spinn(\phi) = -\textrm{sgn}(\gamma^2) = +1$. We conclude, by \cite[Lemma 9.2]{markman}, that $\phi \in \Mo^2_{\textrm{Hdg}}(\hskn)$ (in the case $\phi = R_\gamma$, also see \cite[Proposition 9.12]{markman}). From the relations between $(a,b)$ and $(z,w)$ that we remarked for each of the three pairs $(j,k)$, it is immediate to check that $\nu \in \movb(\hskn)$ and that $\phi\vert_{\ns(\hskn)}$ is the isometry \eqref{eq: action} fixing the line spanned by $\nu$, hence $\phi(\movb(\hskn)) = \movb(\hskn)$. Then by \cite[Lemma 6.22 and Lemma 6.23]{markman} there exists $\sigma \in \bir(\hskn)$ such that $\sigma^* = \phi$.

We are left with the case $t = 1$. For all $n \geq 2$ the group $\aut(\hskn)$ contains the natural involution induced by the generator of $\aut(S)$. 
%Here the equation $(n-1)X^2 - tY^2 = -1$ has always integer solutions, while $X^2 - t(n-1)Y^2 = -1$ has no integer solutions if we assume that $(n-1)X^2 - tY^2 = 1$ has none.
Let $(z,w)$ be the minimal solution of $P_{n-1}(1)$ with $z \equiv \pm 1 \pmod{n-1}$. If there exists a non-natural automorphism then $w \equiv 0 \pmod{2}$, which implies $z \equiv 1 \pmod{2t}$. Thus, by the proof of Proposition \ref{prop: transc}, $(z,w)$ is not the minimal solution of the equation, i.e.\ $b \not \equiv \pm 1 \pmod{n-1}$. On the other hand, if we assume $b \not \equiv \pm 1 \pmod{n-1}$ then $(n-1)X^2 - Y^2 = 1$ has no integer solutions by \cite[Lemma A.2]{debarre}. The existence of a symplectic automorphism follows as in the proof for $t \geq 2$, case ($iii$). By \cite[Proposition 1.1]{catt_autom_hilb} this automorphism is not biregular.
\end{proof}

%If $S$ is a $2$-polarized K3 surface of Picard rank one, the smallest $n$ such that $\bir(\hskn) \cong \IZ/2\IZ \times \IZ/2\IZ$ is $n=9$.

\begin{rem}
Notice that $t=1$ is the only value of $t$ such that the isometry \eqref{eq: action} of $\ns(\hskn)$ can be glued to both $+\id, -\id \in O(\trans(\hskn))$, by \cite[Corollary 1.5.2]{nikulin}. If $t=1$ let $\nu = bh - a\delta \in \ns(\hskn)$ be as in the proof of Theorem \ref{thm: iff}. Then $(-R_\nu) \oplus (-\id) \in O(\ns(\hskn) \oplus \trans(\hskn))$ extends to $-R_\nu \in O(H^2(\hskn,\IZ))$, while $(-R_\nu) \oplus \id$ extends to $R_\gamma$, with $\gamma = (n-1)ah-b\delta$. If $n-1$ is not a square and $b \not \equiv \pm 1 \pmod{n-1}$ both isometries $R_\gamma$ and $-R_\nu$ of $H^2(\hskn,\IZ)$ are Hodge monodromies: they lift to the two non-natural birational automorphisms of $\hskn$. %The lift of $R_\gamma$ is symplectic while the lift of $-R_\nu$ is non-symplectic. Each automorphism can be recovered by composing the other with $\iota^{[n]}$, where $\iota$ is the covering involution which generates $\aut(S)$. Indeed, $(\iota^{[n]})^*\vert_{\ns(\hskn)} = \id$ and $(\iota^{[n]})^*\vert_{\trans(\hskn)} = -\id$ (since $\iota^{[n]}$ is a natural involution).
\end{rem}

\begin{rem}
If for $n \geq 2$, $t \geq 2$ there exists a non-trivial birational automorphism on $\hskn$, it is biregular if and only if $n, t$ satisfy condition ($iii$) of \cite[Theorem 6.4]{catt_autom_hilb}, which guarantees that $\mathcal{A}_{\hskn} = \movb(\hskn)$ (see also Theorem \ref{thm: ambig}). In particular, if $t \leq 2n - 3$ the automorphism is not biregular. 
\end{rem}

The following proposition provides examples of polarization degrees for the K3 surface $S$ so that $\hskn$ has a non-natural birational automorphism, for each of the three different actions on cohomology of Proposition \ref{prop: transc}. 

\begin{prop}\label{prop:families}
Fix $n \geq 2$. For $t \geq 1$, we denote by $S$ an algebraic K3 surface with $\pic(S) = \IZ H$, $H^2 = 2t$.
\begin{enumerate}
\item[($i$)] There exist infinitely many $t$'s such that $\bir(\hskn)$ contains a non-symplectic automorphism $\sigma$ with $H^2(\hskn,\IZ)^{\sigma^*} \cong \langle 2 \rangle$, e.g.\ $t = (n-1)k^2 + 1$ for $k \geq 1$.
\item[($ii$)] There exists $t$ such that $\bir(\hskn)$ contains a  non-symplectic automorphism $\sigma$ with $H^2(\hskn,\IZ)^{\sigma^*} \cong \langle 2(n-1) \rangle$ if and only if $-1$ is a quadratic residue modulo $n-1$. If so, this happens for infinitely many $t$'s, e.g.\ $t = (n-1)k^2 + 2qk + \frac{q^2+1}{n-1}$ for $k, q \geq 1$ and $q^2 \equiv -1 \pmod{n-1}$.
\item[($iii$)] There exists $t$ such that $\bir(\hskn)$ contains a non-trivial symplectic automorphism $\sigma$ if and only if $n-1 = \frac{q^2 - 1}{h}$ for some $q \geq 3, h \not \equiv 0 \pmod{q\pm1}$. If so, $H^2(\hskn,\IZ)_{\sigma^*} \cong \langle -2(n-1) \rangle$ and this happens for infinitely many $t$'s, e.g.\ $t = (n-1)k^2 + 2qk + h$ for $k \geq 1$.
\end{enumerate} 
\end{prop}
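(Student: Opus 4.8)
The plan is to reduce each of the three statements to the solvability of an auxiliary Pell's equation, exactly as dictated by the proof of Theorem \ref{thm: iff}. For $t \geq 2$ the type of a non-natural birational involution is determined by the congruence pair attached to the minimal solution $(z,w)$ of $X^2 - t(n-1)Y^2 = 1$ with $z \equiv \pm 1 \pmod{n-1}$, and Lemma \ref{lemma: congruence} converts this into the three statements:
\begin{itemize}
\item the situation of ($i$) occurs if and only if $(n-1)X^2 - tY^2 = -1$ is solvable (Lemma \ref{lemma: congruence}(iii), using $t \geq 2$ for the relevant implication);
\item the situation of ($ii$) occurs if and only if $X^2 - t(n-1)Y^2 = -1$ is solvable (Lemma \ref{lemma: congruence}(ii));
\item the situation of ($iii$) occurs if and only if the minimal solution $(b,a)$ of $X^2 - t(n-1)Y^2 = 1$ satisfies $b \not\equiv \pm 1 \pmod{n-1}$ (Lemma \ref{lemma: congruence}(i)).
\end{itemize}
Once the correct case is identified, the shape of $H^2(\hskn,\IZ)^{\sigma^*}$ (resp.\ $H^2(\hskn,\IZ)_{\sigma^*}$) is read off directly from Theorem \ref{thm: iff}. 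So it suffices to produce, for each type, infinitely many $t$ realizing the corresponding solvability, and to settle the existence criteria in ($ii$) and ($iii$).

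For ($i$) I would just observe that $t = (n-1)k^2 + 1$ makes $(X,Y)=(k,1)$ a solution of $(n-1)X^2 - tY^2 = -1$; since these $t$ are pairwise distinct and $\geq 2$, Lemma \ref{lemma: congruence}(iii) places us in case ($i$). For ($ii$), reducing any solution $(a,b)$ of $X^2 - t(n-1)Y^2 = -1$ modulo $n-1$ gives $a^2 \equiv -1 \pmod{n-1}$, proving that ``$-1$ is a quadratic residue mod $n-1$'' is necessary; conversely, if $q^2 \equiv -1 \pmod{n-1}$ then $\frac{q^2+1}{n-1}\in\IZ$, and for $t = (n-1)k^2 + 2qk + \frac{q^2+1}{n-1}$ one checks $t(n-1)-1 = ((n-1)k+q)^2$, so $(X,Y)=((n-1)k+q,1)$ solves $X^2 - t(n-1)Y^2 = -1$ and Lemma \ref{lemma: congruence}(ii) gives case ($ii$).

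The main work is ($iii$). The family $t = (n-1)k^2 + 2qk + h$ is built so that $t(n-1)+1 = ((n-1)k+q)^2$ precisely when $h(n-1) = q^2-1$, i.e.\ $n-1 = \frac{q^2-1}{h}$; then $(b,a)=((n-1)k+q,1)$ solves $X^2 - t(n-1)Y^2 = 1$ with $Y=1$, hence is the minimal solution (every positive solution has $Y\geq 1$ and $t(n-1)=b^2-1$ is not a square for $b\geq 2$). The crux is the equivalence between the symplectic condition $b \not\equiv \pm 1 \pmod{n-1}$ and the stated $h \not\equiv 0 \pmod{q\pm 1}$: writing $h=\frac{(q-1)(q+1)}{n-1}$ one computes $\frac{h}{q+1}=\frac{q-1}{n-1}$ and $\frac{h}{q-1}=\frac{q+1}{n-1}$, so $(q+1)\mid h \iff q\equiv 1 \pmod{n-1}$ and $(q-1)\mid h \iff q\equiv -1\pmod{n-1}$; since $b\equiv q\pmod{n-1}$, the symplectic case is exactly $h\not\equiv 0\pmod{q\pm1}$. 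For necessity I would start from any $t$ admitting a symplectic involution, take the minimal solution $(b,a)$ (which has $b\geq 3$, since $b=2$ would force $n-1\in\{1,3\}$ and then $b\equiv\pm1\pmod{n-1}$, excluding the symplectic case), and set $q=b$, $h=\frac{b^2-1}{n-1}\in\IZ$; the same computation yields $n-1=\frac{q^2-1}{h}$ with $h\not\equiv 0\pmod{q\pm1}$.

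Finally I would record the routine checks keeping us inside the hypotheses of Theorem \ref{thm: iff}: in every family $t(n-1)$ equals $c^2\mp 1$ for a suitable $c\geq 2$, hence is never a nonzero square; the value $w=2ab$ of the relevant minimal solution is automatically even; and the competing equation $(n-1)X^2 - tY^2 = 1$ has no solutions for $n\geq 3$ because, by Lemma \ref{lemma: congruence}(iv), solvability there would require $z\equiv -1 \pmod{2(n-1)}$ and $z\equiv 1\pmod{2t}$, a congruence pair different from the one produced in each of our three cases. The expected obstacle is entirely concentrated in ($iii$): verifying that $((n-1)k+q,1)$ is genuinely the minimal solution and that the two formulations of the symplectic condition coincide in both directions.
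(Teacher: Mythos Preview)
Your proof is correct and follows essentially the same route as the paper: both arguments reduce to producing, for each of the three types, an explicit family of $t$'s for which the relevant auxiliary Pell equation in Lemma \ref{lemma: congruence} is solvable, and then invoking Theorem \ref{thm: iff} (the paper phrases this as computing $(z,w)$ directly via \cite[Lemma A.2]{debarre}, which is exactly what underlies your use of Lemma \ref{lemma: congruence}). One small slip: in your final paragraph the claim that ``in every family $t(n-1)$ equals $c^2 \mp 1$'' fails in case ($i$), where $t(n-1) = ((n-1)k)^2 + (n-1)$; the conclusion that $t(n-1)$ is not a square is still correct (e.g.\ because $(m-(n-1)k)(m+(n-1)k) = n-1$ is impossible for $k \geq 1$), but the stated reason needs adjusting.
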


\begin{proof}
As before, we denote by $(z,w)$ the minimal solution of $P_{t(n-1)}(1)$ with $z \equiv \pm 1 \pmod{n-1}$.
\begin{enumerate}
\item[($i$)] If $t = (n-1)k^2 + 1$ and $k \geq 1$, then $t(n-1)$ is not a square and $(z,w) = (2t-1,2k) = (2(n-1)k^2 + 1, 2k)$ by \cite[Lemma A.2]{debarre}, since $(k,1)$ is the solution of $(n-1)X^2 - tY^2 = -1$ with smallest $X > 0$. We conclude with Theorem \ref{thm: iff}.
\item[($ii$)] If $\bir(\hskn)$ contains a non-symplectic automorphism $\sigma$ with $H^2(\hskn,\IZ)^{\sigma^*} \cong \langle 2(n-1) \rangle$, then by Theorem \ref{thm: iff} and Lemma \ref{lemma: congruence} the equation $P_{t(n-1)}(-1)$ is solvable, hence $-1$ is a quadratic residue modulo $n-1$. On the other hand, let $t = (n-1)k^2 + 2qk + \frac{q^2+1}{n-1}$ for $k, q \geq 1$ and $q^2 \equiv -1 \pmod{n-1}$. The minimal solution of $P_{t(n-1)}(-1)$ is $(q+(n-1)k,1)$, therefore $t(n-1)$ is not a square and $(z,w) = (-1+2t(n-1), 2q + 2(n-1)k)$, again by \cite[Lemma A.2]{debarre}. The existence of the automorphism and its action on cohomology follow then from Theorem \ref{thm: iff}.
\item[($iii$)] If $\bir(\hskn)$ contains a symplectic automorphism $\sigma \neq \id$, then by Proposition \ref{prop: action} the minimal solution $(u,s)$ of $P_{t(n-1)}(1)$ has $u \not \equiv \pm 1 \pmod{n-1}$. Hence $n-1 = \frac{u^2 - 1}{ts^2} \geq 8$ and $u \pm 1 \nmid ts^2$. Let now $t = (n-1)k^2 + 2qk + h$ for $k,q,h$ as in the statement. The minimal solution of $P_{t(n-1)}(1)$ is $(u,s) = (q+(n-1)k,1)$. Since $h \not \equiv 0 \pmod{q \pm 1}$, we have $q \not \equiv \pm 1 \pmod{n-1}$, therefore $(z,w) = (1+2t(n-1), 2q + 2(n-1)k)$, which allows us to conclude by Theorem \ref{thm: iff}. \qedhere
\end{enumerate}
\end{proof}

By \cite[Proposition 6.7]{catt_autom_hilb}, if $t = (n-1)k^2 + 1$ as in case ($i$) of Proposition \ref{prop:families} the automorphism of $\hskn$ is biregular whenever $k \geq \frac{n+3}{2}$. The argument in the proof of loc.\ cit.\ can be easily adapted to show the biregularity also for the automorphism of the family presented in case ($ii$), if $k \geq \frac{n+3}{2}$.

%\begin{prop}
%Let $n\geq 3$ and $q\in \mathbb{\IN}$ be such that $q^2\equiv -1 \pmod{n-1}$. Let $t=(n-1)k^2+2qk+\frac{q^2+1}{n-1}$ for $k\geq 1$ and $S$ an algebraic K3 surface with $\pic(S) = \IZ H$, $H^2 = 2t$. If $k\geq \frac{n+3}{2}$, then the non-natural non-symplectic involution which generates $\bir(\hskn)$ is biregular.
%\end{prop}

%\begin{proof}
%By Proposition \ref{prop:families} we just need to prove that $\mathcal{A}_{\hskn} = \movb(\hskn)$. Define $Q=(n-1)k+q$. For $k\geq \frac{n+3}{2}$ we have $4t(n-1)>(n+3)^2 (n-1)^2 \geq \left( \alpha^2-4\rho (n-1) \right)^2$ for all $(\alpha, \rho)$ as in \eqref{eq: alpha_rho}, hence the solutions of $X^2-4t(n-1)Y^2=\alpha^2-4\rho (n-1)$ are encoded in the convergents of the continued fraction of $\sqrt{4t(n-1)}=2\sqrt{Q^2+1} = [2Q;\overline{Q,4Q}]$ (for details see \cite[Chapter XXXIII, \S 16]{chrystal}).
%The proof then follows as for \cite[Proposition 6.7]{catt_autom_hilb}.
%\end{proof}

\section{Decomposition of the movable cone and automorphisms of birational models} \label{subs: decomposition}

We consider the wall-and-chamber decomposition \eqref{eq: wall-and-ch} of the closed movable cone $\overline{\movb(\hskn)} \subset \ns(\hskn)_{\IR} = \IR h \oplus \IR\delta$. As shown in \cite[Lemma 2.5]{catt_autom_hilb} by using \cite[Theorem 12.1]{bayer_macri_mmp}, the walls are spanned by the classes of the form $Xh - 2tY\delta$ which lie in the movable cone, for $(X, Y)$ a positive solution of one of Pell's equations $X^2 - 4t(n-1)Y^2 = \alpha^2 - 4\rho(n-1)$ with $X \equiv \pm \alpha \pmod{2(n-1)}$, where the possible values of $\rho$ and $\alpha$ are:
\begin{align} \label{eq: alpha_rho}
\begin{cases}
\rho = -1 \text{ and } 1 \leq \alpha \leq n-1; \\
\rho = 0 \text{ and } 3 \leq \alpha \leq n-1;\\
1 \leq \rho < \frac{n-1}{4} \text{ and } 4\rho+1 \leq \alpha \leq n-1.
\end{cases}
\end{align}

\begin{defi}
For $n \geq 2$, let $S$ be an algebraic K3 surface with $\pic(S) = \IZ H$, $H^2 =2t$, $t \geq 1$. The value $t$ is said to be \emph{$n$-irregular} if there exists $\sigma \in \bir(\hskn)$ such that for all ihs birational models $g: \hskn \dasharrow X$ the birational automorphism $g \circ \sigma \circ g^{-1} \in \bir(X)$ is not biregular.
\end{defi}

Notice that $\sigma^*\vert_{\ns(\hskn)}$ is a reflection which, by \cite[Lemma 5.12]{markman}, acts on the set of K\"ahler-type chambers of $\overline{\movb(\hskn)}$. If one chamber is preserved, then by \cite[Theorem 1.3]{markman} the automorphism becomes biregular on the corresponding ihs birational model of $\hskn$. Hence $t$ is $n$-irregular if and only if $\bir(\hskn)$ contains a non-natural birational automorphism and its action on $\ns(\hskn)$ fixes one of the walls in $\movb(\hskn)$ (i.e.\ the number of chambers in the decomposition is even).

%If the decomposition of $\overline{\movb(\hskn)}$ has an odd number of chambers, and there exists a non-natural automorphism $\sigma \in \bir(\hskn)$, then the generator $\nu$ of the axis of the reflection $\sigma^*\vert_{\ns(\hskn)}$ is in the interior of one of the chambers. This follows from the fact that the isometry $\sigma^*$ acts on the set of chambers by \cite[Lemma 5.12]{markman}, hence if their number is odd, one of them is preserved by $\sigma^*$, and therefore it contains $\nu$. As a consequence, there exists an ihs birational model (corresponding to the preserved chamber) on which the involution becomes biregular, by \cite[Theorem 1.3]{markman}. Notice that $\overline{\movb(\hskn)}$ has only one chamber if and only if $\sigma \in \aut(\hskn)$.  
%On the other hand, if there is an even number of chambers in the decomposition of $\overline{\movb(\hskn)}$, then $\nu$ lies on one of the walls in the interior of the cone, and therefore $\sigma$ is not biregular on any of the ihs birational models of $\hskn$.

\begin{prop} \label{prop: t=1 e id}
For $n \geq 2$ and $t \geq 1$, assume that there exists a non-natural birational automorphism $\sigma \in \bir(\hskn)$ and either $t=1$ or $\sigma^*\vert_{\trans(\hskn)} = \id$. Then $t$ is $n$-irregular. 
\end{prop}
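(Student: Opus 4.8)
The plan is to invoke the criterion established in Remark \ref{rem: chambers}: the value $t$ is $n$-irregular precisely when $\bir(\hskn)$ contains a non-natural automorphism and the wall-and-chamber decomposition of $\overline{\movb(\hskn)}$ has an \emph{even} number of chambers. Since the existence of a non-natural $\sigma$ is assumed, the whole problem reduces to showing that the number of chambers is even under the stated hypotheses. Equivalently, by the parity discussion in Remark \ref{rem: chambers}, it suffices to prove that the primitive generator $\nu = bh - a\delta$ of the axis of the reflection $\sigma^*\vert_{\ns(\hskn)}$ lies on one of the interior walls of the cone, rather than in the interior of a chamber.

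First I would treat the two hypotheses uniformly. By Remark \ref{rem: sympl}, the condition $\sigma^*\vert_{\trans(\hskn)} = \id$ means $\sigma$ is symplectic; by Proposition \ref{prop: transc} this forces $n \geq 9$ and $\nu^2 = 2t$ (the third bullet), with $(b, \frac{a}{t})$ the minimal solution of $X^2 - t(n-1)Y^2 = 1$. In the case $t = 1$, the relevant $\sigma$ (the lift of $R_\gamma$ from the $t=1$ remark) is again symplectic and the same description of $\nu$ applies, so both hypotheses land in the situation where $\nu = uh - ts\delta$ for $(u,s)$ the genuine minimal solution of $X^2 - t(n-1)Y^2 = 1$ with $u \not\equiv \pm 1 \pmod{n-1}$. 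The key point is then to recognize $\nu$ (or a positive multiple of it) as one of the wall classes listed in \eqref{eq: alpha_rho}.

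The heart of the argument is to exhibit $\nu$ on a wall. A wall is spanned by a class $Xh - 2tY\delta$ inside $\movb(\hskn)$ coming from a positive solution $(X,Y)$ of $X^2 - 4t(n-1)Y^2 = \alpha^2 - 4\rho(n-1)$ with $X \equiv \pm\alpha \pmod{2(n-1)}$, for $(\rho,\alpha)$ in the admissible ranges. I would take $\rho = 0$, so that the target equation becomes $X^2 - 4t(n-1)Y^2 = \alpha^2$; writing $\nu = uh - ts\delta = uh - 2t\cdot\frac{s}{2}\delta$ (recall $w = 2us$ is even, and here one checks $s$ has the right parity, or rescales $\nu$ by $2$), the pair $(X,Y) = (u, \frac{s}{2})$ or a suitable doubling satisfies $u^2 - 4t(n-1)(\frac{s}{2})^2 = u^2 - t(n-1)s^2 = 1$, which I would match to $\alpha^2$ with $\alpha = 1$ — but since the $\rho = 0$ range requires $3 \leq \alpha \leq n-1$, the correct choice is instead $\rho = -1$, $\alpha = 1$, giving target $\alpha^2 - 4\rho(n-1) = 1 + 4(n-1)$, and I would verify the congruence $X \equiv \pm\alpha \pmod{2(n-1)}$ reduces exactly to $u \not\equiv \pm 1 \pmod{n-1}$ being compatible with a wall, using the arithmetic of Lemma \ref{lemma: congruence}. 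The main obstacle is precisely this bookkeeping: choosing the correct $(\rho,\alpha)$, handling the factor-of-two discrepancies between the $Xh - 2tY\delta$ normalization of walls and the $uh - ts\delta$ form of $\nu$, and confirming that the resulting class genuinely lies \emph{inside} the movable cone (not on its boundary rays $h$ or $zh - tw\delta$). Once $\nu$ is placed on an interior wall, the reflection $\sigma^*$ fixes a wall rather than a chamber, so the chambers come in pairs swapped by $\sigma^*$, the count is even, and Remark \ref{rem: chambers} yields that $t$ is $n$-irregular.
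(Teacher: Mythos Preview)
Your overall strategy matches the paper's: invoke Remark \ref{rem: chambers} and exhibit the primitive generator $\nu = uh - ts\delta$ of the fixed axis (with $(u,s)$ the minimal solution of $X^2 - t(n-1)Y^2 = 1$, $u \not\equiv \pm 1 \pmod{n-1}$) on an interior wall. The setup is correct.

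The gap is in the choice of $(\rho,\alpha)$. Any wall-class $Xh - 2tY\delta$ on the ray through $\nu$ has $(X,Y) = (\lambda u, \lambda s/2)$ for some $\lambda$ with $\lambda s$ even, and then
\[
X^2 - 4t(n-1)Y^2 \;=\; \lambda^2\bigl(u^2 - t(n-1)s^2\bigr) \;=\; \lambda^2.
\]
So the right-hand side $\alpha^2 - 4\rho(n-1)$ must be a perfect square. Your pivot to $(\rho,\alpha) = (-1,1)$ gives target $4n-3$, which is odd; but if $s$ is odd then $\lambda$ must be even, so $\lambda^2 \equiv 0 \pmod 4$, and if $s$ is even then $\lambda = 1$ gives target $1 \neq 4n-3$. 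Either way this choice fails, and Lemma \ref{lemma: congruence} does not rescue it.

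You were on the right track with $\rho = 0$ but abandoned it too quickly. With $\rho = 0$ and $\lambda = \alpha$ even (so $Y \in \IZ$ automatically), the remaining constraint is $3 \leq \alpha \leq n-1$ together with $\alpha u \equiv \pm\alpha \pmod{2(n-1)}$, i.e.\ $2(n-1) \mid \alpha(u \mp 1)$. This is exactly where the arithmetic enters, and it is the idea you are missing. From $(u-1)(u+1) = t(n-1)s^2$ one gets $c \coloneqq \max\{\gcd(n-1,u-1),\gcd(n-1,u+1)\} \geq 2$, while $u \not\equiv \pm 1 \pmod{n-1}$ forces $c < n-1$, hence $c \mid n-1$ with $c \leq (n-1)/2$. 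The paper then takes $\alpha = 2(n-1)/c$, an even integer in $[4,n-1]$, and for the sign achieving the maximum one has $\alpha(u\mp 1) = 2(n-1)\cdot\frac{u\mp 1}{c} \in 2(n-1)\IZ$, so the congruence holds and $(X,Y) = (\alpha u, \alpha s/2)$ exhibits $\nu$ on an interior wall.
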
 

\begin{proof}
By the proof of Proposition \ref{prop: transc}, if $t = 1$ or $\sigma^*\vert_{\trans(\hskn)} = \id$ the axis of the reflection $\sigma^*\vert_{\ns(\hskn)}$ is spanned by the class $\nu = bh - ta\delta$, where $(b,a)$ is the minimal solution of $P_{t(n-1)}(1)$. Moreover $n \geq 9$ and $\overline{\movb(\hskn)} = \langle h, zh - tw\delta \rangle_{\IR_{\geq 0}}$ with $(z,w) = (2t(n-1)a^2+1,2ab)$. Since $(b+1)(b-1) = t(n-1)a^2$, we have $c \coloneqq \max\{\gcd(n-1, b-1), \gcd(n-1,b+1)\} \geq 2$. We define $\alpha \coloneqq \max\{4, \frac{2(n-1)}{c}\}$, which is an even integer such that $4 \leq \alpha \leq n-1$. We consider Pell's equation $P_{4t(n-1)}(\alpha^2)$. The pair $(X,Y) = (b \alpha, a\frac{\alpha}{2})$ is a solution with the property $\frac{2Y}{X} = \frac{a}{b}$. We also have $X = b\alpha \equiv \pm \alpha \pmod{2(n-1)}$ by construction, hence $\nu$ lies on the wall (in the interior of the movable cone) spanned by the class $Xh - 2tY\delta$.
\end{proof}

%By the previous proposition, a non-natural birational automorphism of $\hskn$ which acts on $\trans(\hskn)$ as $\id$ (or as $\pm \id$ if $t=1$) is not biregular on any ihs birational model of $\hskn$, which is stronger than what we stated in Remark \ref{rem: sympl}. 
By Proposition \ref{prop:families}, for $n$ fixed the number of $n$-irregular values $t$ as in Proposition \ref{prop: t=1 e id} is either zero or infinite.
%\begin{example}
%For any odd $k \geq 5$ with $k \not \equiv \pm 1 \pmod{8}$, define $t =\frac{k^2-1}{8} \in \IN$. We can readily check that $S^{[9]}$ admits a symplectic non-natural involution by Theorem \ref{thm: iff}, hence $t$ is $9$-irregular by Proposition \ref{prop: t=1 e id}.
%\end{example}
For non-symplectic automorphisms when $t \neq 1$, the behaviour is different: for a fixed $n \geq 2$ there is only a finite number of $n$-irregular values $t$ for which the non-natural birational automorphism of $\hskn$ acts as $-\id$ on $\trans(\hskn)$. We provide an algorithm to compute them. 

\begin{prop} \label{prop: ambiguous}
Let $t \geq 2$ and $n \geq 2$ such that $t(n-1)$ is not a square and $(n-1)X^2 - tY^2 = 1$ has no integer solutions (if $n \neq 2$). Assume that one between $(n-1)a^2 - tb^2 = -1$ and $a^2 - t(n-1)b^2 = -1$ admits integer solutions; define $\ell=1$ if the first equation is solvable or $\ell=n-1$ if the second one is, and let $(a,b)$ be the positive solution with smallest $a > 0$. Then $t$ is $n$-irregular if and only if there exists a pair $(\alpha, \rho)$ as in \eqref{eq: alpha_rho} and a positive solution $(X,Y)$ of $P_{4t(n-1)}(\alpha^2 - 4\rho(n-1))$ with $X \equiv \pm \alpha \pmod{2(n-1)}$ such that $4t\ell Y^2 = (\alpha^2-4\rho(n-1))a^2$.
\end{prop}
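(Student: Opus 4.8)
The plan is to reduce $n$-irregularity to an incidence statement about the reflection axis $\nu$ of $\sigma^*|_{\ns(\hskn)}$, and then to translate that incidence into the displayed Diophantine condition by a short substitution.

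First I would record the setup. Under the standing hypotheses, Theorem \ref{thm: iff} guarantees a non-natural, non-symplectic $\sigma \in \bir(\hskn)$ with $\sigma^*|_{\trans(\hskn)} = -\id$, so by Remark \ref{rem: chambers} the value $t$ is $n$-irregular if and only if the primitive generator $\nu$ of the axis of $\sigma^*|_{\ns(\hskn)}$ lies on one of the walls of the decomposition \eqref{eq: wall-and-ch} situated in the interior of $\overline{\movb(\hskn)}$. By Proposition \ref{prop: transc} and the case analysis of the proof of Theorem \ref{thm: iff}, I can write $\nu = \beta h - a\delta$ with $a, \beta > 0$, where $\beta = b$ if $\ell = 1$ and $\beta = (n-1)b$ if $\ell = n-1$; in both cases the relation $\nu^2 = 2\ell$ yields the key identity $t\beta^2 - (n-1)a^2 = \ell$. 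I would also stress that $\nu$ lies in the open movable cone $\movb(\hskn)$, as already verified in the proof of Theorem \ref{thm: iff}.

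Next I invoke the wall description recalled before the definition of $n$-irregularity (from \cite[Lemma 2.5]{catt_autom_hilb} via \cite[Theorem 12.1]{bayer_macri_mmp}): the interior walls are exactly the rays spanned by classes $Xh - 2tY\delta$ lying in $\movb(\hskn)$, for $(X,Y)$ a positive solution of some $X^2 - 4t(n-1)Y^2 = \alpha^2 - 4\rho(n-1)$ with $X \equiv \pm \alpha \pmod{2(n-1)}$ and $(\alpha, \rho)$ as in \eqref{eq: alpha_rho}. Since $\nu = \beta h - a\delta$, the class $\nu$ lies on such a wall precisely when it is proportional to $Xh - 2tY\delta$, i.e.\ when $aX = 2t\beta Y$. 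The computational heart is then to show that, for $(X,Y)$ solving the Pell equation, the proportionality $aX = 2t\beta Y$ is equivalent to $4t\ell Y^2 = (\alpha^2 - 4\rho(n-1))a^2$. In one direction, substituting $X = 2t\beta Y/a$ into $X^2 - 4t(n-1)Y^2 = \alpha^2 - 4\rho(n-1)$ and clearing denominators gives $4tY^2(t\beta^2 - (n-1)a^2) = (\alpha^2 - 4\rho(n-1))a^2$, which is the asserted identity by $t\beta^2 - (n-1)a^2 = \ell$. Conversely, replacing $\ell$ by $t\beta^2 - (n-1)a^2$ and using the Pell equation to rewrite $\alpha^2 - 4\rho(n-1)$ as $X^2 - 4t(n-1)Y^2$ turns $4t\ell Y^2 = (\alpha^2 - 4\rho(n-1))a^2$ into $(2t\beta Y)^2 = (aX)^2$, and positivity of all the quantities forces $aX = 2t\beta Y$.

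Finally I would assemble the two implications. If $t$ is $n$-irregular, then $\nu$ lies on an interior wall $Xh - 2tY\delta$ of the above form, the congruence $X \equiv \pm\alpha \pmod{2(n-1)}$ coming for free from the wall description; proportionality gives $aX = 2t\beta Y$ and hence the displayed identity, producing the required $(\alpha, \rho, X, Y)$. Conversely, given such data satisfying the identity, the equivalence above yields $aX = 2t\beta Y$, so that $Xh - 2tY\delta = (X/\beta)\,\nu$ is a \emph{positive} multiple of $\nu$; since $\nu$ is in the open movable cone, so is $Xh - 2tY\delta$, whence it defines a genuine interior wall of \eqref{eq: wall-and-ch} passing through $\nu$, and $t$ is $n$-irregular by Remark \ref{rem: chambers}. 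I expect the only delicate point to be this last step: one must check that positivity together with the proportionality places the candidate class on the correct (positive) multiple of $\nu$, so that it lies in the open movable cone and genuinely bounds two chambers, rather than degenerating to the boundary; the remaining manipulations are elementary.
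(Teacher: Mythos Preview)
Your proof is correct and follows essentially the same approach as the paper: reduce $n$-irregularity to the incidence of $\nu$ on an interior wall via Remark \ref{rem: chambers}, translate this to the slope equality $aX = 2t\ell b\,Y$, and then show that, combined with the Pell equation, this is equivalent to the displayed identity. Your algebraic derivation is in fact slightly more direct than the paper's, which reaches the same endpoint via the intermediate form $\ell X^2 - (n-1)(\alpha^2 - 4\rho(n-1))a^2 = \ell(\alpha^2 - 4\rho(n-1))$.
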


\begin{proof}
Let $\nu = \ell bh - a\delta \in \movb(\hskn)$ be the class of square $2\ell$ as in the proof of Theorem \ref{thm: iff}. Then $\nu$ lies on one of the walls in the interior of $\movb(\hskn)$ if and only if $\frac{a}{\ell b} = 2t\frac{Y}{X}$, where $(X,Y)$ is a positive solution of one of the equations $P_{4t(n-1)}(\alpha^2 - 4\rho(n-1))$ with $X \equiv \pm \alpha \pmod{2(n-1)}$ and $(\alpha, \rho)$ as in \eqref{eq: alpha_rho}. We observe that $(2(n-1)a)^2-4t(n-1)(\ell b)^2 = -4\ell (n-1)$, hence the condition on the slopes becomes
%We write the condition on the slopes as:
%\[ \frac{Y}{X} \cdot \frac{\ell b}{2(n-1)a}=\frac{1}{4t(n-1)}\]
%\noindent and we observe that $(2(n-1)a)^2-4t(n-1)(\ell b)^2 = -4\ell (n-1)$. Hence this becomes:
\[\sqrt{\frac{1}{4t(n-1)}-\frac{\alpha^2-4\rho(n-1)}{4t(n-1)X^2}} \sqrt{\frac{1}{4t(n-1)}+\frac{4\ell (n-1)}{4t(n-1)(2(n-1)a)^2}} = \frac{1}{4t(n-1)}. \]
If we rearrange the equation we obtain
\begin{equation} \label{eq: kX^2-ca^2}
\ell X^2 - (n-1)(\alpha^2-4\rho(n-1))a^2 = \ell (\alpha^2-4\rho(n-1))
\end{equation}
\noindent i.e.\ $4t\ell Y^2 = (\alpha^2-4\rho(n-1))a^2$.
\end{proof} 

\begin{rem} \label{rem: irregulars}
\begin{itemize}
\item Notice that, by \eqref{eq: kX^2-ca^2} and $(n-1)a^2 - t(\ell b)^2 = -\ell $, we have $X^2 = (\alpha^2 - 4\rho(n-1))t\ell b^2$. However, $t\ell $ has to divide $\alpha^2 - 4\rho(n-1)$ (because $t\ell $ is coprime with $a$), hence $\alpha^2 - 4\rho(n-1) = t\ell r^2$ for some $r \in \IN$. This gives an easy way to compute a (finite) list of candidates for the $n$-irregular values $t$, among the divisors of $\alpha^2 - 4\rho(n-1)$ for $(\alpha, \rho)$ as in \eqref{eq: alpha_rho}. In particular, $t \leq (n-1)(n+3)$.
\item In order to check if a value $t \geq 2$ is $n$-irregular in Proposition \ref{prop: ambiguous}, it is enough to consider the pairs $(\alpha, \rho)$ such that $\alpha^2 - 4\rho(n-1) > 0$ and the positive solutions $(X,Y)$ with smallest $X$ in each equivalence class of solutions of $P_{4t(n-1)}(\alpha^2 - 4\rho(n-1))$, otherwise the wall spanned by $Xh - 2tY\delta$ is not in the interior of the movable cone.  
\end{itemize}
\end{rem}

If we combine Proposition \ref{prop: ambiguous} with Proposition \ref{prop: transc} and Theorem \ref{thm: iff} we get the following.

\begin{cor}\label{cor: finite}
For a fixed $n \geq 2$, the number of $n$-irregular values $t$ for which $\bir(\hskn)$ contains a non-symplectic birational automorphism is finite. In particular, for $n \leq 8$ there is only a finite number of $n$-irregular values $t$.
\end{cor}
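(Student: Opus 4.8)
The plan is to reduce everything to the bound recorded in Remark \ref{rem: irregulars}, handling the cases $t=1$ and $t \geq 2$ separately for the first assertion, and then excluding the symplectic possibility to deduce the statement for $n \leq 8$.

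First I would dispose of $t=1$, which contributes at most a single value and so is harmless for finiteness. For the main count I therefore restrict to $t \geq 2$. If $t \geq 2$ and $\bir(\hskn)$ contains a non-symplectic birational automorphism $\sigma$, then by Remark \ref{rem: sympl} we have $\sigma^*\vert_{\trans(\hskn)} = -\id$, so by Proposition \ref{prop: transc} (the cases $(j,k) = (1,-1)$ and $(-1,-1)$) one of the equations $(n-1)X^2 - tY^2 = -1$ or $X^2 - t(n-1)Y^2 = -1$ is solvable. Hence the hypotheses of Proposition \ref{prop: ambiguous} are met, and if such a $t$ is moreover $n$-irregular, the first bullet of Remark \ref{rem: irregulars} produces a pair $(\alpha,\rho)$ as in \eqref{eq: alpha_rho} with $\alpha^2 - 4\rho(n-1) = t\ell r^2$ for some $r \in \IN$. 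Since $\ell \geq 1$, and using $\alpha \leq n-1$ together with $\rho \geq -1$, this forces
\[ t \leq t\ell \leq \alpha^2 - 4\rho(n-1) \leq (n-1)^2 + 4(n-1) = (n-1)(n+3). \]
As $n$ is fixed, only finitely many $t$ can occur, which proves the first assertion.

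For the statement about $n \leq 8$, I would argue that in this range every $n$-irregular value $t$ automatically carries a non-symplectic birational automorphism, so that the first part applies to \emph{all} $n$-irregular values. By Proposition \ref{prop: transc}, a symplectic non-natural birational automorphism (i.e.\ one with $\sigma^*\vert_{\trans(\hskn)} = \id$) can exist only when $n \geq 9$; and by Theorem \ref{thm: iff}, for $t=1$ a non-natural automorphism exists only when $n \geq 9$. Consequently, for $n \leq 8$ the value $t=1$ is not $n$-irregular (only the biregular natural involution survives, and it is regular on $\hskn$ itself), while for every $n$-irregular $t \geq 2$ the non-trivial involution of $\bir(\hskn)$ must be non-symplectic since $\sigma^*\vert_{\trans(\hskn)} = \pm\id$. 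The finiteness then follows from the first part, with all $n$-irregular values confined to the interval $[2,(n-1)(n+3)]$.

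The proof is essentially bookkeeping assembled from Proposition \ref{prop: ambiguous} and Remark \ref{rem: irregulars}; I do not expect any genuine obstacle. The only points requiring care are the clean separation of the symplectic and non-symplectic regimes via the action on $\trans(\hskn)$, and the observation that both the symplectic case and the non-natural $t=1$ case are ruled out for $n \leq 8$, which is exactly what lets the bound $t \leq (n-1)(n+3)$ cover all $n$-irregular values in that range.
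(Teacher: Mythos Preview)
Your argument is correct and follows essentially the same route as the paper: reduce to Proposition~\ref{prop: ambiguous} and the divisibility bound $t \leq (n-1)(n+3)$ of Remark~\ref{rem: irregulars} by using Proposition~\ref{prop: transc} to verify that one of the two equations $(n-1)X^2 - tY^2 = -1$, $X^2 - t(n-1)Y^2 = -1$ is solvable in the non-symplectic case, and then observe that for $n \leq 8$ the symplectic case (and the non-natural $t=1$ case) cannot occur. You are slightly more explicit about the $t=1$ edge case than the paper, which simply assumes $t \geq 2$ from the outset.
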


%\begin{proof}
%By Proposition \ref{prop: transc}, assuming $t \geq 2$ a non-natural automorphism $\sigma \in \bir(\hskn)$ satisfies $\sigma^*\vert_{\trans(\hskn)} = -\id$ if and only if the minimal solution of $X^2 - t(n-1)Y^2 = 1$ has $X \equiv \pm 1 \pmod{n-1}$. In particular, this always holds for $n \leq 8$. Then, by Theorem \ref{thm: iff}, either $(n-1)X^2 - tY^2 = 1$ or $X^2 - t(n-1)Y^2 = 1$ is solvable. The statement follows from Proposition \ref{prop: ambiguous} and Remark \ref{rem: irregulars}.
%\end{proof}

In Table \ref{table: ambig}, for $n \leq 14$ we list all $n$-irregular values $t$ as in Corollary \ref{cor: finite}. We separate the values of $t$ for which the middle wall of the movable cone is spanned by a primitive class $\nu$ of square $2$ (i.e.\ $\ell =1$), from those where the generator has square $2(n-1)$ and divisibility $n-1$ (i.e.\ $\ell =n-1$).
For $n \neq 9,12$ the values $t$ in the table are all the $n$-irregular values.%, $9$ and $12$ being the only $n \leq 14$ which satisfy Proposition \ref{prop:families} ($iii$).
We observe that, for $n \leq 5$, the $n$-irregular values are all of the form $t=n$ or $t=4n-3$. %This is justified by the following lemma.

\begin{table}[ht] 
\caption{$n$-irregular values $t$ for $n \leq 14$ as in Corollary \ref{cor: finite}.}
\begin{tabular}{|c|c|c|}
\hline
$n$ & $n$-irregular $t$'s s.t.\ $\nu^2 = 2$ & $n$-irregular $t$'s s.t.\ $\nu^2 = 2(n-1)$\\
\hline
$2$ & \multicolumn{2}{c|}{$5$}\\
\hline
$3$ & $3, 9$ & / \\
\hline 
$4$ & $4, 13$ & / \\
\hline
$5$ & $5,17$ & / \\
\hline
$6$ & $6,9,21$ & / \\
\hline 
$7$ & $7,25,49$ & / \\
\hline
$8$ & $2,4,8,11,16,29,37$ & / \\
\hline
$9$ & $1,9,33,57$ & / \\
\hline
$10$ & $10,13,37,61,85$ & / \\
\hline
$11$ & $11,19,41,49,121$ & / \\
\hline
$12$ & $3,4,5,12,15,25,27,45,125$  & / \\
\hline
$13$ & $1, 13, 49$ & / \\
\hline
$14$ & $14,17,22,38,49,53,77,121,133$ & $5$ \\
\hline
\end{tabular}
\label{table: ambig}
\end{table}

%
%\change{
%In Table \ref{table: image period} we list the corresponding divisors which lie in the complement of the period map associated to the moduli space $\mathcal{M}^n_{2,\gamma}$, with $\gamma$ either $1$ or $2$. Note that, already for $n=2$, those are not the only divisors in the complement of the period map, cfr.  \cite[Theorem $6.1$]{debarre_macri}.
%}
%
%\begin{center}
%\begin{table}[ht] 
%\caption{$(n,t,\gamma)$'s for $n\leq 14$ s.t. one irr. comp. of $\mathcal{D}_{2t}\subset \mathcal{M}^n_{2,\gamma}$ lies outside the image of the period map.}
%\begin{tabular}{|c|c|c|c|c|c|c|c|c}
%\hline
%$n$ & $2$ & $3$ & $4$ & $5$ & $6$ & $7$ & $8$ & $9$  \\
%\hline
%$t$'s s.t. $\gamma=1$ & $5$ & $3, 9$ &  $4$, $13$ & $5$, $17$ & $6,9,21$ & $7,25,49$ & $8,29,37$   & $1,9,33,57$ \\ \hline
%$t$'s s.t. $\gamma=2$ & /   &    /   &    /       &  /        &   /      &        /  & $2,4,11,16$ &
%   /       \\ \hline
%\end{tabular}
%\label{table: image period}
%\end{table}
%
%\begin{table}[ht]
%\begin{tabular}{c|c|c|c|c|}
%\hline
% $10$ & $11$ & $12$ & $13$ & $14$ \\ \hline
%  $10,13,37,61,85$ & $11,19,41,49,121$ &  $4,5,12,45,125$ & $1, 13, 49$ & $14,17,22,38,49,53,77,121,133$ \\ \hline
% / &          /           &  $3,15,25,27$    &  /          & /\\ \hline
%\end{tabular}
%\end{table}
%\end{center}

\begin{lemma}\label{lemma: t=n,4n-3}
If $n = tr^2 \geq 3$ with $r \in \IN$ and $(t, n) \neq (1, 4)$, the value $t$ is $n$-irregular.
For all $n \geq 2$ and $r \in \IN$ such that $t = \frac{4n - 3}{r^2} \in \IN$, the value $t$ is $n$-irregular unless $(t, n) = (1, 3), (1,7)$.
\end{lemma}

\begin{proof}
\begin{itemize}
\item Let $t = \frac{n}{r^2} \in \IN$. By Theorem \ref{thm: iff}, if $t = 1$ and $r \neq 2$ we have $\bir(\hskn) \cong \IZ/2 \IZ \times \IZ/2 \IZ$ and the statement follows from Proposition \ref{prop: t=1 e id}. If $t \geq 2$, the positive solution of $(n-1)a^2 - t b^2= -1$ with smallest $a$ is $(a,b)=(1,r)$, hence by \cite[Lemma A.2]{debarre} the minimal solution of $X^2 - t(n-1)Y^2 = 1$ is $(z,w)=(2tr^2 - 1, 2r) = (2(n-1) + 1, 2r)$. Thus the Hilbert scheme $\hskn$ has a non-symplectic birational automorphism by Theorem \ref{thm: iff}. Proposition \ref{prop: ambiguous} allows us to conclude by noticing that $(X,Y)=(2tr^2, r)$ is a solution for the equation $P_{4t(n-1)}(\alpha^2 - 4\rho(n-1))$ with $\rho = -1, \alpha = 2$, satisfying $X \equiv \alpha \pmod{2(n-1)}$ and $4t Y^2 = (\alpha^2-4\rho(n-1))a^2$. 

\item If $t = \frac{4n-3}{r^2} = 1$, there exists a non-natural birational automorphism on $\hskn$ if and only if $n \neq 3, 7$. If $t \geq 2$, we can assume that the minimal solution of $(n-1)a^2 - tb^2 = -1$ has $a \neq 1$ (otherwise we are in the previous case $n = tb^2$), hence it is $(a,b) = (2,r)$. The pair $(X,Y) = (4n-3,r)$ is a solution of $P_{4t(n-1)}(\alpha^2 - 4\rho(n-1))$ with $\rho = -1$, $\alpha = 1$ and it satisfies $X \equiv \alpha \pmod{2(n-1)}$, so the relation $4tY^2 = (\alpha^2 - 4\rho(n-1))a^2$ of Proposition \ref{prop: ambiguous} holds. \qedhere
\end{itemize}
%Let $n=t r^2$ with $r \geq 1$, $t \geq 2$ (the case $t = 1$ follows from Proposition \ref{prop: t=1 e id}). The continued fraction of $\sqrt{t(n-1)}$ is $[t-1; \overline{2, 2(t-1)}]$ if $r = 1$ and $[tk-1; \overline{1, 2(r-1), 1, 2(tr-1)} ]$ if $r \geq 2$. 

%As for the last part, it is sufficient to compute the divisibility of the class generating the invariant lattice of the involution, which is equal to $\gcd(b,2(n-1))$.

%We are considering $t = (n-1)k^2 + 1$ for $k = 1,2$, so by Proposition \ref{prop:families} the Hilbert scheme $\hskn$ has a non-natural birational automorphism which acts as $-\id$ on $\trans(\hskn)$. The minimal solution of $(n-1)a^2 - tb^2 = -1$ is $(a,b) = (k,1)$.
%\begin{itemize}
%\item If $t = n$ and $n \geq 3$, the minimal solution of $P_{4t(n-1)}(\alpha^2 - 4\rho(n-1))$ with $\rho = -1$, $\alpha = 2$ is $(X,Y) = (2n,1)$, which satisfies $X \equiv \alpha \pmod{2(n-1)}$.
%\item If $t = 4n-3$ and $n \geq 2$, the minimal solution of $P_{4t(n-1)}(\alpha^2 - 4\rho(n-1))$ with $\rho = -1$, $\alpha = 1$ is $(X,Y) = (4n-3,1)$, which satisfies $X \equiv \alpha \pmod{2(n-1)}$.
%\end{itemize}
%In both cases the relation $4tY^2 = (\alpha^2 - 4\rho(n-1))a^2$ holds, hence the statement follows from Proposition \ref{prop: ambiguous}.
\end{proof}

A special case of Lemma \ref{lemma: t=n,4n-3} is $t = n$, where it is known that the automorphism of $\hskn$ which acts as the reflection in the (only) wall contained in the interior of $\movb(\hskn)$ is Beauville's involution \cite[\S 6]{beauville_rmks}, which is biregular if and only if $n=2$.

%In Proposition \ref{prop:families} we showed that for $t = (n-1)k^2 + 1$, $k \geq 1$ the Hilbert scheme $\hskn$ has a non-symplectic birational involution. We observed in Lemma \ref{lemma: t=n,4n-3} that the involution is not biregular for $k=1,2$, while by \cite[Proposition 6.7]{catt_autom_hilb} it is biregular whenever $k \geq \frac{n+3}{2}$. It seems however that this lower bound on $k$ is far from optimal.
%
%\begin{conj}
%\label{congettura}
%Let $n \geq 2$ and $t = (n-1)k^2 + 1$, for $k \in \IN$. Let $S$ be an algebraic K3 surface with $\pic(S) = \IZ H$, $H^2 = 2t$. If $k \geq 3$, then the non-natural non-symplectic involution which generates $\bir(\hskn)$ is biregular.
%\end{conj}
%
%We checked computationally that the conjecture holds for all $n \leq 14$.

\begin{rem}
We provide a modular interpretation of $n$-irregular values. Let $\mathcal{M}^n_{2,\gamma}$ be the moduli space of $2$-polarized ihs manifolds of $K3^{[n]}$-type with polarization of divisibility $\gamma\in \lbrace 1,2 \rbrace$. By \cite[Proposition 3.2]{apostolov}, $\mathcal{M}^n_{2,\gamma}$ is connected if it is not empty (which is always the case when $\gamma = 1$, while for $\gamma = 2$ we need $n \equiv 0 \pmod{4}$). We have a period map $\mathcal{M}^n_{2,\gamma} \ra \mathcal{P}^n_{2,\gamma}$ defined as in \cite[\S 3.9]{debarre}, which is no longer surjective. In the case $n=2$ its image was computed in \cite[\S 3]{debarre_macri}. For $s \in \IN$, let $\mathcal{D}_{2s}$ be the Heegner divisor in $\mathcal{P}^n_{2,\gamma}$ whose preimage in $\mathcal{M}^n_{2,\gamma}$ is the Noether-Lefschetz divisor of manifolds which are special of discriminant $2s$ (we refer to \cite[\S 3]{debarre} for notation and details). For a $2t$-polarized K3 surface $S$ as in Theorem \ref{thm: iff} with $\sigma \in \bir(\hskn)$ non-natural and $H^2(\hskn, \IZ)^{\sigma^*} = \IZ \nu \cong \langle 2 \rangle$, the divisibility of $\nu$ is $\gamma = \gcd(b,2(n-1)) = \gcd(b,2) \in \left\{ 1, 2\right\}$, where $(a,b)$ is the positive solution of $(n-1)a^2 - t b^2=-1$ with smallest $a$. If we assume that $t$ is $n$-irregular, for any ihs birational model $g: \hskn \dasharrow X$ the big and nef divisor $D$ on $X$ such that $g^*(D) = \nu$ is not ample. Thus there is an irreducible component of $\mathcal{D}_{2t}\subset \mathcal{P}^n_{2,\gamma}$ which lies outside of the image of the period map. Proposition \ref{prop: ambiguous} and Lemma \ref{lemma: t=n,4n-3} can then be used to produce examples of such divisors. For instance, for every $n\geq 3$ both $\mathcal{D}_{2n}$, $\mathcal{D}_{8n-6}\subset \mathcal{P}^n_{2,1}$ have at least one irreducible component outside the image of the period map.  
\end{rem}

\section{K\"ahler-type chambers and automorphisms for $n=3$} \label{sec: n=3}

It is known that for a K3 surface $S$ of Picard rank one the number of K\"ahler-type chambers in the decomposition of $\overline{\movb(S^{[2]}}$) is $d \in \left\{ 1,2,3 \right\}$ (see for instance \cite[Example 3.18]{debarre}). We now detail the computation of the number of chambers for $n=3$ in the cases where $\overline{\movb(\hskn)} = \langle h, zh-tw\delta\rangle_{\IR_{\geq 0}}$, with $(z,w)$ the minimal solution of $P_{t(n-1)}(1)$ with $z \equiv \pm 1 \pmod{n-1}$. In particular, this holds whenever $\bir(\hskt) \neq \left\{ \id \right\}$.

%For an integer $t \geq 2$ such that $2t$ is not a square, consider the two generalized Pell's equations:
%\begin{align*}
%\pnine : \; &X^2 - 8t Y^2 = 9 \\
%\ptwelve : \; &X^2 - 8t Y^2 = 12.
%\end{align*}

%\begin{rem} \label{rem: preliminaries}
%\begin{itemize}
%\item If we write the two equations as $X^2 - 8tY^2 = 8 + \alpha^2$, with $\alpha \in \left\{ 1,2 \right\}$, then it is clear that all solutions $(X,Y)$ satisfy $X \equiv \pm \alpha \pmod{4}$. 
%\item If $(X,Y)$ is a solution of $\pnine$ and $(X', Y')$ is a solution of $\ptwelve$, then $\frac{Y}{X} \neq \frac{Y'}{X'}$
%since $\sqrt{\frac{12}{9}}$ is not rational.
%\end{itemize} 
%\end{rem}

Let $S$ be an algebraic K3 surface such that $\pic(S) = \IZ H$, $H^2 = 2t$, $t \geq 2$. As explained in Section \ref{subs: decomposition}, if $\bir(\hskt) \neq \left\{ \id \right\}$ then $2t$ is not a square, $2X^2 - tY^2 = 1$ has no integer solutions and the minimal solution of $P_{2t}(1)$ has $Y \equiv 0 \pmod{2}$.
As a consequence $\overline{\movb(\hskt)} = \langle h, zh-2tw\delta\rangle_{\IR_{\geq 0}}$, where $(z,w)$ is the minimal solution of $\pone$. The walls in the interior of the movable cone are the rays through $Xh - 2tY\delta$, for $(X,Y)$ positive solution of $P_{8t}(8 + \alpha^2)$ with $\alpha \in \left\{1,2 \right\}$ and $0 < \frac{Y}{X} < \frac{w}{z}$ (all solutions satisfy $X \equiv \pm \alpha \pmod{4}$). Clearly if $(X,Y)$ is a solution of $\pnine$ and $(X', Y')$ is a solution of $\ptwelve$, then $\frac{Y}{X} \neq \frac{Y'}{X'}$. Thus the number of chambers in the movable cone coincides with the number of combined equivalence classes of solutions for $\pnine$ and $\ptwelve$ (the class of the solution $(3,0)$ of $\pnine$ determines the two extremal rays of the movable cone). This, combined with Lemma \ref{lemma: pnine} and Lemma \ref{lemma: ptwelve}, gives the following result.

\begin{prop}\label{prop: numb_chambers}
Let $t \geq 2$ such that $2t$ is not a square, $2X^2 - tY^2 = 1$ has no integer solutions and the minimal solution of $X^2 - 2tY^2 = 1$ has $Y \equiv 0 \pmod{2}$. The possible numbers of chambers in the movable cone of $S^{[3]}$, for $S$ an algebraic K3 surface such that $\pic(S) = \IZ H$, $H^2 = 2t$, are as follows:

\begin{center}
\begin{tabular}{c|c|c|c|c|c|c|c|c|c}
$t \mod 18$ & $0$ & $1$ & $2$ & $3$ & $4$ & $5$ & $6$ & $7$ & $8$\\ \hline
%\emph{\# eq.\ classes $\pnine$} & $1,2,3$ & $1$ & $1,3$ & $1$ & $1$ & $1,3$ & $1$ & $1$ & $1,3$\\ \hline
%\emph{\# eq.\ classes $\ptwelve$} & $0$ & $0$ & $0$ & $0,1$ & $0$ & $0,2$ & $0$ & $0$ & $0$\\ \hline
\emph{\# chambers} & $1,2,3$ & $1$ & $1,3$ & $1,2$ & $1$ & $1,3,5$ & $1$ & $1$ & $1,3$
\end{tabular}

\medskip
\begin{tabular}{c|c|c|c|c|c|c|c|c|c}
$t \mod 18$ & $9$ & $10$ & $11$ & $12$ & $13$ & $14$ & $15$ & $16$ & $17$\\ \hline
%\emph{\# eq.\ classes $\pnine$} & $1,2,3$ & $1$ & $1,3$ & $1$ & $1$ & $1,3$ & $1$ & $1$ & $1,3$\\ \hline
%\emph{\# eq.\ classes $\ptwelve$} & $0$ & $0$ & $0,2$ & $0$ & $0$ & $0$ & $0$ & $0$ & $0,2$\\ \hline
\emph{\# chambers} & $1,2,3$ & $1$ & $1,3,5$ & $1$ & $1$ & $1,3$ & $1$ & $1$ & $1,3,5$
\end{tabular}
\end{center} 

In particular, if $t \equiv 1,4,6,7,10,12,13,15,16 \pmod{18}$, then $\bir(S^{[3]}) = \aut(S^{[3]})$.
\end{prop}

%In all cases where $\movb(\hskt)$ has no walls in its interior any birational automorphism is biregular by the global Torelli theorem for ihs manifolds.

From Theorem \ref{thm: iff}, Corollary \ref{cor: finite} and the results of this section we conclude the following.

\begin{prop} \label{prop: bir iff}
Let $S$ be an algebraic K3 surface such that $\pic(S) = \IZ H$, $H^2 = 2t$. If $t = 1$, then $\bir(\hskt) = \aut(\hskt) \cong \IZ/2\IZ$. If $t \geq 2$, then $\bir(\hskt) \neq \left\{ \id \right\}$ if and only if:
\begin{itemize}
\item $2t$ is not a square;
\item $2X^2 - tY^2 = 1$ has no integer solutions;
\item either $2X^2 - tY^2 = -1$ or $X^2 - 2tY^2 = -1$ has integer solutions.
\end{itemize}
If $\bir(\hskt) \neq \left\{ \id \right\}$, let $d$ be the number of chambers in the decomposition of $\overline{\movb(\hskt)}$. Then $d \in \left\{ 1,2,3,5 \right\}$ and one of the following holds:
\begin{itemize}
\item $d = 1$ and $\bir(S^{[3]}) = \aut(S^{[3]}) \cong \IZ/2\IZ$;
\item $d = 2$, $t = 3$ or $t = 9$, $\aut(\hskt) = \left\{ \id \right\}$ and $\bir(\hskt) \cong \IZ/2\IZ$;
\item $d=3,5$, $\aut(\hskt) = \left\{ \id \right\}$ and $\bir(S^{[3]}) \cong \IZ/2\IZ$
\end{itemize}
If $t \neq 3,9$ and $\sigma \in \bir(\hskt)$, there exists an ihs sixfold $X$ and a birational map $g: \hskt \dashrightarrow X$ such that $g \circ \sigma \circ g^{-1} \in \aut(X)$. 
\end{prop}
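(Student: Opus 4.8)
The plan is to assemble the statement from the three ingredients already established for $n=3$: the existence criterion of Theorem~\ref{thm: iff}, the chamber count of Proposition~\ref{prop: numb_chambers}, and the parity dichotomy of Remark~\ref{rem: chambers}. I would first dispose of the case $t=1$ by invoking Theorem~\ref{thm: iff} directly: with $n-1=2$, the integer $b$ produced there solves $2X^2-Y^2=-1$, hence $b^2=2a^2+1$ is odd, so $b\equiv 1\pmod{2}$ and we always land in the subcase $\bir(\hskt)=\aut(\hskt)\cong\IZ/2\IZ$.

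For $t\geq 2$ I would specialize the equivalence of Theorem~\ref{thm: iff} to $n=3$. Since the symplectic case $(j,k)=(1,1)$ occurs only for $n\geq 9$ by Proposition~\ref{prop: transc}, only the non-symplectic pairs $(1,-1)$ and $(-1,-1)$ survive; translating the congruences on the minimal solution $(z,w)$ of $X^2-2tY^2=1$ through parts ($ii$)--($iv$) of Lemma~\ref{lemma: congruence} (with $n-1=2$) rewrites the hypotheses of Theorem~\ref{thm: iff} precisely as the three displayed bullet conditions: $2t$ is not a square, $2X^2-tY^2=1$ is unsolvable, and one of $2X^2-tY^2=-1$, $X^2-2tY^2=-1$ is solvable. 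This is routine bookkeeping. Because there are no natural automorphisms for $t\geq 2$, whenever these conditions hold we obtain $\bir(\hskt)=\langle\sigma\rangle\cong\IZ/2\IZ$.

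Next I would read the admissible values $d\in\{1,2,3,5\}$ off the table of Proposition~\ref{prop: numb_chambers}, observing in particular that $d=4$ never arises. The group structure then follows from Remark~\ref{rem: chambers}, which gives $\sigma\in\aut(\hskt)$ if and only if $d=1$: thus $d=1$ yields $\aut(\hskt)=\bir(\hskt)\cong\IZ/2\IZ$, whereas $d>1$ forces $\aut(\hskt)=\{\id\}$ with $\bir(\hskt)\cong\IZ/2\IZ$. By the same remark, $\sigma$ becomes biregular on some ihs birational model exactly when $d$ is odd, i.e.\ $\sigma$ biregularizes on no model precisely when $d$ is even, equivalently when $t$ is $3$-irregular.

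The only delicate point, and where I expect the genuine content to lie, is matching the even case (which, since $d\in\{1,2,3,5\}$, means $d=2$) with $t\in\{3,9\}$. Here $d=2$ forces the numbers of equivalence classes to satisfy $\#\pnine+\#\ptwelve=2$; since $(3,0)$ is always a solution of $\pnine$ we have $\#\pnine\geq 1$, so by Lemmas~\ref{lemma: ptwelve} and~\ref{lemma: pnine} this reduces to $(\#\pnine,\#\ptwelve)=(1,1)$ with $t\equiv 3\pmod{18}$, or $(2,0)$ with $t\equiv 0\pmod 9$. To pin these down I would invoke the classification of $3$-irregular values from Corollary~\ref{cor: finite} and Proposition~\ref{prop: ambiguous}, recorded in Table~\ref{table: ambig}, whose only entries for $n=3$ are $t=3$ and $t=9$; a direct check confirms both give $d=2$. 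Conversely, for $t\neq 3,9$ the number $d$ is odd, so Remark~\ref{rem: chambers} supplies a chamber preserved by $\sigma^*$ and hence an ihs birational model $g:\hskt\dashrightarrow X$ on which $g\circ\sigma\circ g^{-1}$ is biregular; the subcases $\sigma=\id$ and $t=1$ being trivial, this yields the final assertion.
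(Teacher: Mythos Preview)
Your proposal is correct and follows precisely the route the paper indicates: the proposition is stated as an immediate consequence of Theorem~\ref{thm: iff}, Corollary~\ref{cor: finite}, and the results of Section~\ref{sec: n=3}, and you have faithfully unpacked that one-line proof by specializing Theorem~\ref{thm: iff} via Lemma~\ref{lemma: congruence}, reading off the chamber count from Proposition~\ref{prop: numb_chambers}, and identifying the $3$-irregular values through Remark~\ref{rem: chambers} and Table~\ref{table: ambig}. There is no substantive difference in approach.
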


%Table \ref{table: dim tre casi} lists the number $d$ of chambers in the decomposition of $\overline{\movb(\hskt)}$ and the structure of the groups $\aut(\hskt)$, $\bir(\hskt)$, for an algebraic K3 surface $S$ with $\pic(S) = \IZ H$, $H^2 = 2t$, $1 \leq t \leq 30$ when $\bir(\hskt) \neq \left\{ \id \right\}$.
%
%\begin{table}[ht]
%\caption{Chambers and automorphisms for $n=3$.} \label{table: dim tre casi}
%\begin{tabular}{c|c|c|c}
%$t$ & $d$ & $\aut(\hskt)$ & $\bir(\hskt)$\\
%\hline
%$1$ & $1$ & $\IZ/2\IZ$ & $\IZ/2\IZ$\\
%$3$ & $2$ & $\left\{ \id \right\}$ & $\IZ/2\IZ$\\
%%$4$ & $1$ & $\left\{ \id \right\}$ & $\left\{ \id \right\}$\\
%$5$ & $3$ & $\left\{ \id \right\}$ & $\IZ/2\IZ$\\
%%$6$ & $1$ & $0$ & $0$\\
%$9$ & $2$ & $\left\{ \id \right\}$ & $\IZ/2\IZ$\\
%%$10$ & $1$ & $0$ & $0$\\
%$11$ & $5$ & $\left\{ \id \right\}$ & $\IZ/2\IZ$\\
%%$12$ & $1$ & $0$ & $0$\\
%$13$ & $1$ & $\IZ/2\IZ$ & $\IZ/2\IZ$\\
%%$14$ & $3$ & $0$ & $0$\\
%%$15$ & $1$ & $0$ & $0$\\
%%$16$ & $1$ & $0$ & $0$\\
%$19$ & $1$ & $\IZ/2\IZ$ & $\IZ/2\IZ$\\
%%$20$ & $2$ & $0$ & $0$\\
%%$21$ & $1$ & $0$ & $0$\\
%%$22$ & $1$ & $0$ & $0$\\
%%$24$ & $1$ & $0$ & $0$\\
%$25$ & $1$ & $\IZ/2\IZ$ & $\IZ/2\IZ$\\
%%$26$ & $3$ & $0$ & $0$\\
%$27$ & $3$ & $\left\{ \id \right\}$ & $\IZ/2\IZ$\\
%%$28$ & $1$ & $0$ & $0$\\
%$29$ & $3$ & $\left\{ \id \right\}$ & $\IZ/2\IZ$\\
%%$30$ & $1$ & $0$ & $0$\\
%\end{tabular}
%\end{table}

\section{Ambiguous Hilbert schemes and birational models} \label{sec: ambig}

Having classified the group of birational automorphisms of $\hskn$, for a K3 surface $S$ with Picard rank one, we explain  in this section how the same approach can be used to study the more general problem of whether there exists a K3 surface $\Sigma$, again of Picard rank one, and a birational map $\phi: \hskn \dashrightarrow \sigman$ which do not come from an isomorphism $S \overset{\cong}{\ra} \Sigma$. This is related to the notion of \emph{(strong) ambiguity} for Hilbert schemes of points on K3 surfaces, which for $n=2$ (and $\phi$ biregular) has been investigated in \cite{debarre_macri} and \cite{zuffetti}. Some of the results of this section overlap (even though they are proved in a different way) with those of \cite{mmy}, where birationality of derived equivalent Hilbert schemes of 
K3 surfaces is studied.

If $\hskn$ and $\sigman$ are birational, then $S$ and $\Sigma$ are Fourier--Mukai partners and $\hskn$ and $\sigman$ are derived equivalent (see \cite[Proposition 10]{ploog}). If $\pic(S) = \IZ H$ with $H^2 = 2t$, $t \geq 1$, then by \cite[Proposition 1.10]{oguiso_primes} the number of non-isomorphic FM partners of $S$ is $2^{\rho(t)-1}$, where $\rho(t)$ denotes the number of prime divisors of $t$ (and $\rho(1) = 1$). We can classify the Fourier--Mukai partners $\Sigma$ of $S$ as follows (for details see for instance \cite[\S 4]{oguiso_primes}). The overlattice $L = H^2(\Sigma, \IZ)$ of $\trans(S) \oplus \ns(S)$ (with integral Hodge structure defined by setting $L^{2,0} = \trans(S)^{2,0}$) corresponds to an isotropic subgroup $I_L \subset A_{\trans(S)} \oplus A_{\ns(S)} = \frac{\IZ}{2t\IZ} (-\frac{1}{2t}) \oplus \frac{\IZ}{2t\IZ} (\frac{1}{2t})$ (as in \cite[\S 1.4]{nikulin}). Since $\ns(S)$ and $\trans(S)$ are primitive in $L$, the group $I_L$ is of the form $I_L = I_a \coloneqq \langle \epsilon + a\eta \rangle$ for some $a \in \left(\IZ/2t\IZ\right)^\times$, $a^2 \equiv 1 \pmod{4t}$, where $\epsilon$ (respectively, $\eta$) is a generator of $A_{\trans(S)}$ (respectively, $A_{\ns(S)}$) on which the finite quadratic form takes value $-\frac{1}{2t}$ (respectively, $+\frac{1}{2t}$). For each $a \in \left(\IZ/2t\IZ\right)^\times$, $a^2 \equiv 1 \pmod{4t}$, there exists a K3 surface $\Sigma_a$ (unique up to isomorphism) such that $H^2(\Sigma_a, \IZ)$ is Hodge-isometric to the overlattice $L_a$ of $\trans(S) \oplus \ns(S)$ defined by $I_a$. Moreover, $\Sigma_a \cong \Sigma_b$ if and only if $b \equiv \pm a \pmod{2t}$. Indeed, $2^{\rho(t)-1}$ is the cardinality of $\left\{ a \in \left(\IZ/2t\IZ\right)^\times, a^2 \equiv 1 \pmod{4t} \right\}/\pm \id$.

\begin{rem} \label{rem: unique model}
If there exists a birational map $\phi: \hskn \dashrightarrow \sigman$ not induced by an isomorphisms of the K3 surfaces, then $\Sigma$ is uniquely determined up to isomorphism as $\phi^*(\movb(\sigman)) = \movb(\hskn)$ and $\phi^*$ needs to invert the primitive generators of the extremal rays of the movable cones.
%then $\phi^*(\movb(\sigman)) = \movb(\hskn)$ (\cite[Corollary 5.7]{markman}). Let $\left\{ h_S, \delta_S \right\}$ and $\left\{ h_\Sigma, \delta_\Sigma \right\}$ be the canonical bases for $\ns(\hskn)$ and $\ns(\sigman)$ respectively, as in Section \ref{subs: action on cohom}. Then by \cite[Theorem 1]{bs} the map $\phi$ is induced by an isomorphism of the underlying K3 surfaces if and only if $\phi^*(\delta_\Sigma) = \delta_S$. As a consequence, if $\phi$ does not come from an isomorphism $S \ra \Sigma$ then $\phi^*(h_\Sigma)$ is the primitive generator of the extremal ray of $\movb(\hskn)$ not spanned by $h_S$. This implies that $\Sigma$ is uniquely defined, up to isomorphism: in fact, if we also have $\phi': \hskn \dashrightarrow (\Sigma')^{[n]}$ which does not come from an isomorphism $S \ra \Sigma'$, then $\phi' \circ \phi^{-1}: \sigman \dashrightarrow (\Sigma')^{[n]}$ is induced by an isomorphism $\Sigma \ra \Sigma'$, since its pullback maps $h_{\Sigma'}$ to $h_{\Sigma}$ (hence, also $\delta_{\Sigma'}$ to $\delta_{\Sigma}$). 
This, together with Theorem \ref{thm: ambig} below, implies that for any $\hskn$ there is at most one chamber of $\movb(\hskn)$ other than the ample cone for which the associated birational ihs model is an Hilbert scheme of points. 
%In particular, if $\bir(\hskt)\neq \lbrace \id\rbrace$ the chamber opposite to the ample cone corresponds to an ihs birational model which is in fact isomorphic to $\hskt$.

\end{rem}

\begin{theorem}\label{thm: ambig}
Let $S$ be an algebraic K3 surface such that $\pic(S) = \IZ H$, with $H^2 = 2t$, $t \geq 1$. For $n \geq 2$, let $(z,w)$ be the minimal solution of $P_{t(n-1)}(1)$ with $z \equiv \pm 1 \pmod{n-1}$. There exists a K3 surface $\Sigma$ and a birational map $\phi: \hskn \dashrightarrow \sigman$ which is not induced by an isomorphism $S \ra \Sigma$ if and only if:
\begin{itemize}
\item $t(n-1)$ is not a square;
\item if $n \neq 2$, $(n-1)X^2 - tY^2 = 1$ has no integer solutions;
\item $z \equiv \pm 1 \pmod{2(n-1)}$ and $w \equiv 0 \pmod{2}$.
\end{itemize}

If so, the K3 surfaces $S$ and $\Sigma$ are isomorphic if and only if $z \equiv \pm 1 \pmod{2t}$. Moreover, $\phi$ is biregular if and only if, for all integers $\rho, \alpha$ as follows:
\begin{itemize}
\item $\rho = -1$ and $1 \leq \alpha \leq n-1$, or
\item $\rho = 0$ and $3 \leq \alpha \leq n-1$, or
\item $1 \leq \rho < \frac{n-1}{4}$ and $\max\left\{4\rho+1,\ceil[\bigg]{2\sqrt{\rho(n-1)}}  \right\} \leq \alpha \leq n-1$
\end{itemize}
if Pell's equation $P_{4t(n-1)}(\alpha^2 - 4\rho(n-1))$ is solvable, the minimal solution $(X,Y)$ with $X \equiv \pm \alpha \pmod{2(n-1)}$ satisfies $\frac{Y}{X} \geq \frac{w}{2z}$.
\end{theorem}

\begin{proof}
If there exists a birational map $\phi: \hskn \dashrightarrow \sigman$ which does not come from an isomorphism $S \ra \Sigma$ then both extremal rays of $\movb(\hskn)$ correspond to Hilbert--Chow contractions (Remark \ref{rem: unique model}). By \cite[Theorem 5.7]{bayer_macri_mmp}, $t(n-1)$ is not a square, $(n-1)X^2 - tY^2 = 1$ has no integer solutions (if $n \neq 2$) and $z \equiv \pm 1 \pmod{2(n-1)}$, $w \equiv 0 \pmod{2}$. On the other hand, assume that these conditions are satisfied. Let $a \in \left(\IZ/2t\IZ\right)^\times$, $a^2 \equiv 1 \pmod{4t}$, such that $S \cong \Sigma_a$. Since $z^2 - t(n-1)w^2 = 1$ and $w$ is even, we can consider the FM partner $\Sigma_{za}$ of $S$. As $\trans(\hskn) \cong \trans(S)$ and $\ns(\hskn) \cong \ns(S) \oplus \langle -2(n-1)\rangle$, the groups $I_a$ and $I_{za}$ can also be seen as isotropic subgroups of $A_{\trans(\hskn)} \oplus A_{\ns(\hskn)}$ (in particular, we assume $2t\eta = h \in \ns(\hskn)$). It is then immediate to check that the overlattices of $\trans(\hskn) \oplus \ns(\hskn)$ defined by these two subgroups are $H^2(\sigman_a, \IZ)$ and  $H^2(\sigman_{za}, \IZ)$ respectively. Let $\mu \in O(\ns(\hskn))$ be the isometry \eqref{eq: action}. We have that $\id \oplus \mu \in O(\trans(\hskn) \oplus \ns(\hskn))$ extends to a Hodge isometry $\psi: H^2(\sigman_a, \IZ) \ra H^2(\sigman_{za}, \IZ)$, because $\overline{\mu}(a\eta) = za\eta \in A_{\ns(\hskn)}$ (here we use the fact that $w$ is even). Notice that the discriminant group of $H^2(\hskn,\IZ)$ is generated by the class of $\frac{\delta}{2(n-1)}$ and $\overline{\mu}\left( \frac{\delta}{2(n-1)}\right) = -\frac{(n-1)wh}{2(n-1)}-\frac{z\delta}{2(n-1)} = \pm \frac{\delta}{2(n-1)}$. By \cite[Corollary 1.5.2]{nikulin}, $\psi$ extends to a Hodge isometry $H^*(\Sigma_a, \IZ) \ra H^*(\Sigma_{za},\IZ)$ between the Mukai lattices of the two K3 surfaces $\Sigma_a, \Sigma_{za}$. We conclude that $\sigman_a$ and $\sigman_{za}$ are birationally equivalent, by \cite[Corollary 9.9]{markman}.

As stated before $\Sigma_a$ and $\Sigma_{za}$ are isomorphic if and only if $a \equiv \pm za \pmod{2t}$, i.e.\ $z \equiv \pm 1 \pmod{2t}$. The isomorphism $\phi: \hskn \dashrightarrow \sigman$ is biregular if and only if the Hodge isometry $\phi^*: H^2(\sigman, \IZ) \ra H^2(\hskn, \IZ)$ maps ample classes to ample classes, i.e.\ there is only one chamber in the decomposition of $\overline{\movb(\hskn)}$. The last part of the statement follows then as in the proof of \cite[Theorem 6.4]{catt_autom_hilb}.
\end{proof}

%If $S \cong \Sigma$, Theorem \ref{thm: ambig} coincides with Theorem \ref{thm: iff}.

\begin{rem} It can be readily checked that the conditions in the first part of Theorem \ref{thm: ambig} are equivalent to those of  \cite[Theorem 2.2]{mmy}. Indeed, if we write $z = 2(n-1)k + \zeta$ and $w = 2h$ for $k,h \in \IN$ and $\zeta \in \left\{ \pm 1 \right\}$, then there exist $p,q,r,s \in \IN$ such that $k = sp^2$, $k(n-1) + \zeta = rq^2$, $h = pq$, $t = rs$. In particular, $(n-1)sp^2 - rq^2 = \pm 1$, which is what is requested in \cite[Theorem 2.2]{mmy}. The FM partner $\Sigma$ of $S$ such that there exists a birational map $\hskn \dashrightarrow \sigman$ not coming from an isomorphism $S \overset{\cong}{\ra} \Sigma$ is the moduli space $M_{S}(p^2s,pqH,q^2r) \cong M_S(s,H,r)$.
\end{rem}

\section{Geometrical examples} \label{sec: constructions}

We discuss realizations of some birational automorphisms of Hilbert schemes, whose existence follows from Theorem \ref{thm: iff}. Let $S$ be a $2t$-polarized K3 surface of Picard rank one and $n \geq 2$. As already recalled, if $t=n$ then $\bir(\hskn)$ is generated by Beauville's (non-symplectic) involution \cite[\S 6]{beauville_rmks}.

If $t = 5$, the general K3 surface $S$ of degree $2t = 10$ is a transverse intersection $\textrm{Gr}(2,5) \cap \Gamma \cap Q \subset \IP^9$, where $\Gamma \cong \IP^6$ and $Q$ is a quadric. For $n=2, 3$ the Hilbert schemes $\hskn$ admit non-symplectic birational involutions, which have been explicitely described by O'Grady \cite[\S 4.3]{o'grady_epw_inv} and Debarre \cite[Example 4.12]{debarre} respectively. The next $n$ such that $\bir(\hskn) \neq \left\{ \id \right\}$ is $n=9$, where the involution is symplectic.

In the following examples we consider the case $t = 2$ for three values of $n$ where $\hskn$ is equipped with a non-symplectic birational involution. Let $\mathcal{K}_{4}$ be the moduli space of $4$-polarized K3 surfaces.

%We present constructions of the non-symplectic involutions for $(n,t) = (6,2)$, $(8,2)$ and $(18,2)$. Let $\mathcal{K}_{4}$ be the moduli space of $4$-polarized K3 surfaces. %We denote by $D_{x,y}\subset \mathcal{K}_4$ the Noether--Lefschetz divisor corresponding to polarized K3 surfaces $(S,H)$ for which there exists $B \in \Pic(S)$ with $(H,B)=x$, $B^2=y$ and such that the sublattice of $\Pic(S)$ generated by $B, H$ is primitive. 

\begin{example}[$n=6$, $t = 2$]
Here the equation $P_{t(n-1)}(-1)$ is solvable with minimal solution $(3,1)$. Thus for $(S,H) \in \mathcal{K}_4$ with $\pic(S) = \IZ H$, the birational non-symplectic involution $\sigma$ which generates $\bir(S^{[6]})$ satisfies $H^2(S^{[6]},\IZ)^{\sigma^*} = \IZ\nu \cong \langle 10 \rangle$ with $\nu = 5h-3\delta$. Let $S\subset \mathbb{P}^3$ be a smooth quartic surface which does not contain any twisted cubic curves, e.g.\ $(S,H) \in \mathcal{K}_{4} \setminus D_{3,-2}$, where $H$ is the polarization given by an hyperplane section and $D_{3,-2} \subset \mathcal{K}_4$ is the Noether--Lefschetz divisor of $4$-polarized K3 surfaces $(S',H')$ for which there exists $B \in \Pic(S')$ with $(H',B)=3$, $B^2=-2$, $\langle B, H' \rangle \subset \pic(S')$ primitive. If $p_1,\ldots,p_6 \in S$ are in general linear position, there is a single rational normal curve (i.e.\ a twisted cubic) passing through them, which we denote by $C_3$. Since $C_3$ does not lie on $S$, the intersection $S\cap C_3$ is a $0$-dimensional scheme of length $12$.
%consists of twelve points.
%Since Z is locally principal in CZ and contained in (CZ ∩ S) there is a
%well-defined residual scheme to Z in (CZ ∩S) with respect to CZ, see p. 161
%of [Fu];
The curve $C_3$ is smooth therefore the residual scheme $Z$ to $p_1+\ldots+p_6 \in S\cap C_3$ is well-defined as in \cite[\S 9]{fulton}. The birational involution of $S^{[6]}$ associates $p_1+\ldots+p_6$ to $Z$.

%By associating the $p_i$'s with the six residual points of intersection, we obtain a birational 
%involution of $S^{[6]}$. 
\end{example}

\begin{example}[$n\in \lbrace 8, 18\rbrace $, $t=2$] 
\normalfont
We have $n=tr^2$ for $r=2,3$. By Lemma \ref{lemma: t=n,4n-3} the birational non-symplectic involution $\sigma$ generating $\bir(S^{[n]})$ satisfies $H^2(S^{[n]},\IZ)^{\sigma^*} = \IZ\nu \cong \langle 2 \rangle$ with $\nu = rh-\delta$. Let $(S,H)$ be a $4$-polarized $K3$ surface with $H$ very ample and $S \hookrightarrow \IP^{n+1}$ be the composition of $\phi_{|H|}:S \ra \left| \mathcal{O}_S(H)^\vee \right| \cong \IP^3$ and $\nu_r :\IP^{3}\to \IP^{n+1}$, the Veronese map of degree $r$. The natural map $\Sym^r(H^0(S,H))\to H^0(S,rH)$ is an isomorphism, hence $\nu_r \circ \phi_{|H|} = \phi_{|rH|}$. In particular $S\subset \IP^{n+1}$ has degree $2n$ so the birational involution on $S^{[n]}$ can be described as a Beauville involution as in \cite[\S $6$]{beauville_rmks}. For the general surface, the indeterminacy locus of the automorphism is $\left\{Z\in S^{[n]}: \dim \langle Z \rangle = n-2\right\}$. If we consider $S \hookrightarrow \IP^3$ via $\phi_{|H|}$, the involution associates  $p_1+\ldots+p_n$ to its residual scheme $Z$ in the intersection between $S$ and the base locus of the linear system of degree-$r$ surfaces of $\IP^3$ through $p_1 + \ldots+p_n$.

\end{example}

For illustrative purpose, we provide the list of pairs $(n,t)$ with $2 \leq n \leq 10$, $2 \leq t \leq 10$, $t \neq n$ such that $\bir(\hskn) \neq \left\{ \id \right\}$:
\begin{equation*}
 (n,t) = (2,5), (2,10), (3,5), (3,9), (4,7), (6,2), (6,9), (6,10), (8,2), (8,4), (9,3), (9,5).
\end{equation*}
The involution which generates $\bir(\hskn)$ is symplectic for $(n,t) = (9,3), (9,5)$, non-symplectic in the other listed cases.

\appendix
\section{Pell's equations} \label{sec: prelim}
A \emph{generalized Pell's equation} is a quadratic diophantine equation $$P_r(m): X^2 - rY^2 = m$$ in the unknowns $X, Y \in \IZ$, for $r \in \IN$ and $m \in \IZ \setminus \left\{ 0\right\}$. If $m=1$, the equation is called \emph{standard}.
%If the equation is solvable and $r$ is not a square, the (infinite) set of solutions is divided into equivalence classes:
Two solutions $(X,Y)$, $(X',Y')$ of $P_r(m)$ are said to be equivalent if
\[ \frac{XX' - rYY'}{m} \in \IZ, \qquad \frac{XY' - X'Y}{m} \in \IZ.\]
If the equation is standard, then solutions exist and they are all equivalent.

Inside any equivalence class of solutions, the \emph{fundamental} solution $(X,Y)$ is the one with smallest non-negative $Y$, if unique. Otherwise, the smallest non-negative value of $Y$ is realized by two \emph{conjugate} solutions $(X,Y)$, $(-X,Y)$: in this case, the fundamental solution of the class will be $(X,Y)$ with $X > 0$. If $(X,Y)$ is a fundamental solution of $P_r(m)$, all other solutions $(X',Y')$ in the same equivalence class are of the form
\begin{equation} \label{eq: pell solutions}
\begin{cases}
X' = aX + rbY \\
Y' = bX + aY
\end{cases}
\end{equation}
\noindent where $(a,b)$ is a solution $P_r(1)$.

A solution $(X,Y)$ is called \emph{positive} if $X > 0$, $Y > 0$. The \emph{minimal} solution of the equation is the positive solution with smallest $X$.
%In particular, the minimal solution is one of the fundamental solutions.
We also use the expression ``minimal solution with a property $P$'' to denote the positive solution with smallest $X$ among those which satisfy the property $P$.  

Let $(z,w)$ be the minimal solution of $P_r(1)$ and $m \in \IZ \setminus \left\{ 0\right\}$. The half-open interval $\left[(\sqrt{m}, 0), (z\sqrt{m}, w\sqrt{m})\right)$ on the hyperbola $X^2-rY^2 = m$ contains exactly one solution for each equivalence class of $P_r(m)$. The solutions in this interval are all the solutions $(X,Y)$ of $P_r(m)$ such that $X > 0$ and $0 \leq \frac{Y}{X} < \frac{w}{z}$. Moreover, if $(X,Y)$ is a fundamental solution of $P_r(m)$ and $m > 0$, then:
\begin{equation} \label{eq: fundamental bounds}
0 < \left| X \right| \leq \sqrt{\frac{(z+1)m}{2}}, \qquad 0 \leq Y \leq w\sqrt{\frac{m}{2(z+1)}}.
\end{equation}

We often make use of the following lemma.

\begin{lemma} \label{lemma: congruence}
For $t \geq 1$ and $n \geq 2$ such that $t(n-1)$ is not a square, let $(z,w)$ be the minimal solution of $P_{t(n-1)}(1)$ with $z \equiv \pm 1 \pmod{n-1}$. Assume $w \equiv 0 \pmod{2}$.
\begin{enumerate}
\item[($i$)] $z \equiv 1 \pmod{2(n-1)}$ and $z \equiv 1 \pmod{2t}$ if and only if $(z,w)$ is not the minimal solution of $P_{t(n-1)}(1)$.
\item[($ii$)] $z \equiv -1 \pmod{2(n-1)}$ and $z \equiv -1 \pmod{2t}$ if and only if $P_{t(n-1)}(-1)$ is solvable.
\item[($iii$)] If $z \equiv 1 \pmod{2(n-1)}$ and $z \equiv -1 \pmod{2t}$ then the equation $(n-1)X^2 - tY^2 = -1$ has integer solutions; if $t \geq 2$ the converse also holds.
\item[($iv$)] If $z \equiv -1 \pmod{2(n-1)}$ and $z \equiv 1 \pmod{2t}$ then the equation $(n-1)X^2 - tY^2 = 1$ has integer solutions; if $n \geq 3$ the converse also holds.
\end{enumerate} 
\end{lemma}

\begin{proof}
Let $w = 2m$ for $m \in \IN$.
\begin{enumerate}
\item[($i$) -- ($ii$)] Write $z = 2(n-1)p \pm 1 = 2tq \pm 1$ for $p,q \in \IN$. Then $p((n-1)p \pm 1) = tm^2$ and $(n-1)p = tq$, hence $r \coloneqq \frac{p}{t} \in \IN$. We have $r((n-1)rt \pm 1) = m^2$ and it follows that there exist $s,u \in \IN$ such that $r = s^2$, $(n-1)tr \pm 1 = u^2$, $m = su$, hence $u^2 - t(n-1)s^2 = \pm 1$. If the sign is $+$, notice that $u < z$, therefore $(z,w)$ is not the minimal solution of $P_{t(n-1)}(1)$. Conversely, assume first that $P_{t(n-1)}(-1)$ is solvable and let $(a,b)$ be the minimal solution. Then by \cite[Lemma A.2]{debarre} the minimal solution of $P_{t(n-1)}(1)$ is $(z,w) = (2t(n-1)b^2 - 1, 2ab)$, which satisfies $z \equiv -1 \pmod{2(n-1)}$ and $z \equiv -1 \pmod{2t}$. Similarly, if the minimal solution $(u,s)$ of $P_{t(n-1)}(1)$ does not satisfy $u \equiv \pm 1 \pmod{n-1}$, then by \eqref{eq: pell solutions} we have $(z,w) = (2t(n-1)s^2+1,2us)$, hence $z \equiv 1 \pmod{2(n-1)}$ and $z \equiv 1 \pmod{2t}$.
\item[($iii$) -- ($iv$)] Write $z = 2(n-1)p \pm 1 = 2tq \mp 1$ for $p,q \in \IN$. Then $p((n-1)p \pm 1) = tm^2$ and $(n-1)p \pm 1 = tq$, hence $pq = m^2$. Since $\gcd(p,q) = 1$, there exist $s,u \in \IN$ such that $p = s^2$, $q = u^2$ and $m = su$, thus $(n-1)s^2 - tu^2 = \mp 1$. Vice versa, let $(a,b)$ be the integer solution of $(n-1)a^2 - tb^2 = \pm 1$ with smallest $a > 0$. By \cite[Lemma A.2]{debarre}, the assumption $t \geq 2$ (if the sign is $-$) or $n \geq 3$ (if the sign is $+$) implies that the minimal solution of $P_{t(n-1)}(1)$ is $(z,w) = (2(n-1)a^2 \mp 1, 2ab)$, which satisfies $z \equiv \mp 1 \pmod{2(n-1)}$ and $z \equiv \pm 1 \pmod{2t}$.\qedhere
\end{enumerate}
\end{proof}

The next two lemmas are used in Section \ref{sec: n=3}. They give bounds for the number of equivalence classes of solutions of $\pnine$ and $\ptwelve$, depending on $t \in \IN$.

\begin{lemma} \label{lemma: pnine}
If $t \equiv 1 \pmod{3}$ or $t \equiv 3,6 \pmod{9}$, then all solutions of $\pnine$ are equivalent to $(X,Y) = (3,0)$. If $t \equiv 0 \pmod{9}$, then $\pnine$ has either one, two or three classes of solutions. If $t \equiv 2 \pmod{3}$, then $\pnine$ has either one or three classes of solutions.
\end{lemma}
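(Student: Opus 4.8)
The plan is to sort the solutions of $\pnine$ by their divisibility by $3$ and to count equivalence classes separately for the imprimitive and the primitive solutions. First I would record that the transformations \eqref{eq: pell solutions} relating solutions inside one class are given by matrices in $\mathrm{GL}_2(\IZ)$ (the matrix and its inverse are both integral), so they preserve $\gcd(X,Y)$; hence imprimitive solutions (those with $3 \mid \gcd(X,Y)$) can never be equivalent to primitive ones. The solution $(3,0)$ is always present, and a short computation shows that every solution with $3 \mid X$ and $3 \mid Y$ is of the form $(3X',3Y')$ with $(X',Y')$ a solution of the standard equation $X'^2 - 8tY'^2 = 1$; since $\frac{(3X_1')(3X_2') - 8t(3Y_1')(3Y_2')}{9}$ and $\frac{(3X_1')(3Y_2') - (3X_2')(3Y_1')}{9}$ are always integers, the equivalence criterion shows that all such solutions lie in the single class of $(3,0)$. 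This contributes exactly one class in every case.

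Next I would reduce modulo $3$ to locate the primitive solutions, using $X^2 \equiv 2tY^2 \pmod 3$. If $t \equiv 1 \pmod 3$ then $3 \nmid Y$ would force $X^2 \equiv 2 \pmod 3$, which is impossible, so $3 \mid Y$ and hence $3 \mid X$: there is no primitive solution and $\pnine$ has exactly one class. If $t \equiv 0 \pmod 3$ then $3 \mid X$ always; writing $X = 3X'$ gives $8tY^2 = 9(X'^2-1)$, and when $3 \| t$ (i.e.\ $t \equiv 3,6 \pmod 9$) this forces $3 \mid Y$ as well, again leaving only the imprimitive class. This settles the first assertion of the lemma, and reduces everything to bounding the number of primitive classes when $t \equiv 2 \pmod 3$ or $9 \mid t$.

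For $t \equiv 2 \pmod 3$ a primitive solution has $3 \nmid X$ and $3 \nmid Y$, so $\gcd(Y,9)=1$ and $z \coloneqq XY^{-1} \bmod 9$ is a root of $z^2 \equiv 8t \pmod 9$. This congruence has exactly the two roots $\pm z$, both prime to $3$; they correspond to conjugate solutions, and no primitive solution here can be ambiguous, since self-conjugacy would require $9 \mid 2XY$, impossible when $3 \nmid XY$. As conjugation sends solutions to solutions and interchanges the two roots, the number of primitive classes is $0$ or $2$, and the total is $1$ or $3$. For $9 \mid t$ I would instead descend: putting $X = 3X'$ and $t'' = t/9$, the primitive solutions correspond to solutions of $X'^2 - 8t''Y^2 = 1$ with $3 \nmid Y$, and two of them are $\pnine$-equivalent exactly when $X'_1 Y_2 \equiv X'_2 Y_1 \pmod 3$, i.e.\ when they define the same point of $\IP^1(\IF_3)$. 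The constraint $X'^2 \equiv 1 + 2t'' \pmod 3$ coming from the equation restricts these points to at most two (to the single point $[0:1]$, an ambiguous class with $9 \mid X$, when $t'' \equiv 1 \pmod 3$, and to none when $t'' \equiv 2 \pmod 3$), so the number of primitive classes is $0$, $1$, or $2$ and the total is $1$, $2$, or $3$.

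The main obstacle is precisely the case $9 \mid t$: because the modulus $9$ is not squarefree, the naive bijection between primitive classes and residues $z \bmod 9$ solving $z^2 \equiv 8t$ overcounts, so the class number cannot simply be read off from the congruence. The clean route is the descent to $X'^2 - 8t''Y^2 = 1$ combined with the projective reduction modulo $3$; this also transparently explains how the intermediate value $2$ (for the total) arises, namely through the ambiguous primitive class with $9 \mid X$ occurring when $t \equiv 9 \pmod{27}$, whereas for $t \equiv 2 \pmod 3$ no ambiguous class exists and the count jumps directly from $1$ to $3$.
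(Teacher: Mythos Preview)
Your argument is correct, and for the cases $t\equiv 1\pmod 3$ and $3\parallel t$ it coincides with the paper's proof. For the remaining two cases you take a genuinely different route.

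For $t\equiv 2\pmod 3$ the paper picks two fundamental solutions $(u_1,v_1),(u_2,v_2)$ of $\pnine$ not in the class of $(3,0)$, multiplies the two equations to produce a solution of $\pone$, and derives a contradiction from the bounds \eqref{eq: fundamental bounds}; it then checks separately that the two conjugate classes are distinct. You instead attach to each primitive solution the invariant $z\equiv XY^{-1}\pmod 9$, a root of $z^2\equiv 8t\pmod 9$, and use that this residue determines the class. Your approach is shorter and more conceptual; the paper's is closer to classical ``composition of forms'' arguments. One point you should make explicit: you need that two primitive solutions with the same $z$ are equivalent. This is immediate from the definition of equivalence (if $X_i\equiv zY_i\pmod 9$ then $X_1Y_2-X_2Y_1\equiv 0$ and $X_1X_2-8tY_1Y_2\equiv (z^2-8t)Y_1Y_2\equiv 0\pmod 9$), but as written your text only asserts that conjugation is a fixed-point-free involution on primitive classes, which gives ``even'' rather than ``$\le 2$''.

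For $9\mid t$ the paper writes $X=3X'$, reduces to the standard equation $(X')^2-8(t/9)Y^2=1$, and argues that among its first three positive solutions exactly one has $3\mid Y$, which bounds the number of $\pnine$-classes by three. You instead translate $\pnine$-equivalence into equality of $[X':Y]$ in $\IP^1(\IF_3)$ and read off the admissible points from $X'^2\equiv 1+2t''\pmod 3$. This is again more direct and in fact yields a refinement the paper does not state: the total number of classes is $1$ when $t\equiv 18\pmod{27}$, at most $2$ when $t\equiv 9\pmod{27}$ (the extra class, if present, being ambiguous with $9\mid X$), and $1$ or $3$ when $27\mid t$.
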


\begin{proof}
If $t = 3q+1$, for $q \in \IN_0$, then $X^2 \equiv 2Y^2 \pmod{3}$, hence all solutions $(X,Y)$ are of the form $(3X',3Y')$, with $(X')^2 - 8t(Y')^2 = 1$. This is now a standard Pell's equation, which has only one class of solutions, thus the same holds for $\pnine$.

If $t = 3q$ for $q \in \IN$ and $(q,3) = 1$, then $X = 3X'$ for some $X' \in \IZ$ such that $3(X')^2 - 8qY^2 = 3$. Since $3 \nmid q$ we need $(X,Y) = (3X', 3Y')$ with $(X')^2 - 24q(Y')^2 = 1$. There exists only one class of solutions $(X',Y')$ for this standard Pell's equation, thus also the solutions of $\pnine$ form a single class.

Assume now that $t = 9q$ for $q \in \IN$. Then a solution $(X,Y)$ of $\pnine$ is of the form $(X,Y) = (3X',Y)$ with $(X')^2 - 8qY^2 = 1$. Let $(z,w)$ be the minimal solution of $\pone : z^2 - 8tw^2 = 1$. Notice that the solutions of $\pone$ are the pairs $(X', \frac{Y}{3})$ for all solutions $(X',Y)$ of  $(X')^2 - 8qY^2 = 1$ such that $Y \equiv 0 \pmod{3}$. Let $(a,b)$ be the minimal solution of $(X')^2 - 8qY^2 = 1$. Since this is a standard Pell's equation, by \eqref{eq: pell solutions} its next two solutions (for increasing values of $X'$) are $(a^2 + 8qb^2,2ab)$ and $(a^3+24qab^2, 8qb^3+3a^2b)$. We observe that one among these first three positive solutions $(X',Y)$ has $Y \equiv 0 \pmod{3}$. The positive solution $(X',Y)$ with this property and smallest $X'$ is therefore equal to $(z, 3w)$, thus the corresponding solution $(X,Y) = (3X',Y)$ of $\pnine$ satisfies $\frac{Y}{X} = \frac{w}{z}$, i.e.\ it is the first positive solution in the same equivalence class of $(3,0)$. We conclude that $\pnine$ has either one, two or three classes of solutions.

Finally, assume that $t \equiv 2 \pmod{3}$. Let $(z,w)$ be the minimal solution of $\pone$ and let $(u_1,v_1)$, $(u_2,v_2)$ be two positive solutions of $\pnine$ such that, for $i=1,2$: 
\begin{equation}\label{eq: fundamental}
0 < u_i \leq 3\sqrt{\frac{z+1}{2}}, \quad 0 < v_i \leq \frac{3w}{\sqrt{2(z+1)}}.
\end{equation}
By \eqref{eq: fundamental bounds}, this is equivalent to asking that either $(u_i, v_i)$ or $(-u_i, v_i)$ is a fundamental solution of $\pnine$, different from $(3,0)$. Thus $u_1,v_1,u_2,v_2$ are not divisible by three. From $u_1^2 - 8tv_1^2 = 9$ and $u_2^2 - 8tv_2^2 = 9$ we get 
\begin{equation}\label{eq: pm mod9}
u_1v_2 \equiv \pm u_2v_1 \pmod{9}, \quad u_1u_2 \equiv \pm 8t v_1v_2 \pmod{9}
\end{equation}
where the signs in the two congruences coincide. If we now multiply $u_1^2 - 8tv_1^2 = 9$ and $u_2^2 - 8tv_2^2 = 9$ member by member, we obtain
\begin{equation}\label{eq: reduce to P1}
\left( \frac{u_1u_2 \mp 8tv_1v_2}{9} \right)^2 - 8t\left( \frac{u_1v_2 \mp u_2v_1}{9} \right)^2 = 1
\end{equation}
\noindent where by \eqref{eq: pm mod9} the two squares in the LHS term are integers. If we assume that the pairs $(u_1,v_1)$ and $(u_2,v_2)$ are distinct, then $u_1v_2 \mp u_2v_1 \neq 0$. Since $(z,w)$ is the minimal solution of $\pone$, from \eqref{eq: reduce to P1} we have $\left| u_1v_2 \mp u_2v_1 \right| \geq 9w$. However, from \eqref{eq: fundamental} we compute $\left| u_1v_2 \mp u_2v_1 \right| < 9w$, which is a contradiction. We conclude $u_1 = u_2$, $v_1 = v_2$. Thus, there are at most three classes of solutions for $\pnine$: the class of $(3,0)$ and possibly the classes of $(u_1,v_1)$ and $(-u_1,v_1)$. Notice that the latter two classes are always distinct: in order for them to coincide we would need 
\[ \frac{u_1^2 + 8tv_1^2}{9} \in \IZ, \qquad \frac{2u_1v_1}{9} \in \IZ\]
\noindent which does not happen, since $\gcd(u_1,3) = \gcd(v_1,3) = 1$. Hence, the equation has either one or three classes of solutions.
\end{proof}

In an entirely similar way one can prove the following.

\begin{lemma} \label{lemma: ptwelve}
If $t \equiv 3 \pmod{18}$, then $\ptwelve$ is either not solvable or it has one class of solutions. If $t \equiv 5,11,17 \pmod{18}$, then $\ptwelve$ is either not solvable or it has two classes of solutions. In all other cases, $\ptwelve$ is not solvable.
\end{lemma}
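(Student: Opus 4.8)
The plan is to analyze the generalized Pell's equation $\ptwelve : X^2 - 8tY^2 = 12$ via its reduction $\pthree : Z^2 - 2tY^2 = 3$ (using $X = 2Z$ as noted in Remark \ref{rem: preliminaries}) and control the number of equivalence classes through quadratic residue conditions modulo the relevant primes. The key observation from the preliminary remark is that $\ptwelve$ and $\pthree$ have the same solvability and the same number of equivalence classes, so it suffices to study $\pthree$. By \cite[Theorem 110]{nagell}, when $\pthree$ is solvable it has exactly one class of solutions if $t \equiv 0 \pmod 3$ and two conjugate classes otherwise; thus the content of the lemma is really about \emph{solvability}, constrained to the residue of $t \pmod{18}$.

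First I would reduce the possible residues of $t$ modulo small primes by necessary conditions for solvability. From $Z^2 \equiv 3 \pmod{2t}$ (equivalently $Z^2 - 3 \equiv 0 \pmod{2t}$), the equation $\pthree$ forces $3$ to be a quadratic residue modulo every odd prime $p \mid t$, and Remark \ref{rem: preliminaries} already records $t \equiv 1 \pmod 2$. Working modulo $9$: since $Z$ is odd and $Z^2 - 3 = 2tY^2$ with $Y$ odd, reducing modulo $3$ shows $Z^2 \equiv 3 \equiv 0 \pmod 3$ is impossible unless $3 \mid Z$, which then forces $3 \mid (Z^2 - 3)$ but $9 \nmid 3$, giving a constraint linking $t$ to residues modulo $9$. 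Combining the congruence $t \equiv 1 \pmod 2$ with the analysis modulo $9$ (i.e. splitting on whether $3 \mid t$), one sees that solvability can only occur when $t \pmod{18}$ lies in the stated set $\{3, 5, 11, 17\}$, and that all other residues make $\pthree$ (hence $\ptwelve$) unsolvable.

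The case distinction in the conclusion then follows directly by cross-referencing with \cite[Theorem 110]{nagell}: the residue $t \equiv 3 \pmod{18}$ is precisely the case $3 \mid t$ (with the remaining residues of $t$ modulo $2$ and modulo $9$ consistent with solvability), yielding at most one class; while $t \equiv 5, 11, 17 \pmod{18}$ are the cases $t \not\equiv 0 \pmod 3$, yielding at most two conjugate classes when solvable. I would package the modular bookkeeping by a short table or explicit enumeration of $t \pmod{18}$, discarding the even residues by Remark \ref{rem: preliminaries} and discarding the odd residues incompatible with $3$ being a square modulo $3$-divisors of $2t$.

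The main obstacle is the modulo-$9$ analysis: one must carefully track the $3$-adic valuation on both sides of $Z^2 - 3 = 2tY^2$, since $3 \mid Z$ is forced but the exact power of $3$ dividing $Z^2 - 3$ interacts delicately with $3 \mid t$ versus $3 \nmid t$. This is where the distinction between the single residue $t \equiv 3 \pmod{18}$ and the triple $\{5,11,17\}$ genuinely comes from, rather than from the coarser modulo-$2$ or modulo-$3$ conditions alone. Everything else is a routine invocation of \cite[Theorem 110]{nagell} together with the equivalence-preserving bijection between solutions of $\ptwelve$ and $\pthree$ established in Remark \ref{rem: preliminaries}.
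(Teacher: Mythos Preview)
Your overall strategy matches the paper's exactly: reduce $\ptwelve$ to $\pthree$ via $X=2Z$, invoke \cite[Theorem~110]{nagell} for the class count (one class if $3\mid t$, two otherwise), use parity from Remark~\ref{rem: preliminaries} to discard even $t$, and then do a mod-$3$/mod-$9$ case analysis to rule out the remaining residues. However, the modular bookkeeping you outline does not actually close two of the cases, and the ``$3$ is a quadratic residue mod every prime $p\mid t$'' observation, while true, is a red herring that does neither job.

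First, your argument does not exclude $t\equiv 1\pmod 3$ (i.e.\ $t\equiv 1,7,13\pmod{18}$). The QR criterion says nothing here since $3\nmid t$, and your $3$-adic valuation discussion is predicated on $3\mid Z$, which you have not established when $3\nmid t$. Concretely, $t=13$ passes your stated filters ($3$ is a square mod $13$, $t$ is odd, $3\nmid t$) but must be excluded. The paper handles this by reducing $Z^2-2tY^2=3$ modulo $3$ to get $Z^2\equiv 2Y^2\pmod 3$; since $2$ is a non-residue this forces $3\mid Z$ and $3\mid Y$, whence $9\mid 3$, a contradiction. Second, your $3$-adic argument when $3\mid t$ only yields $3\Vert t$ (from $3\Vert Z^2-3=2tY^2$), which does \emph{not} separate $t\equiv 3\pmod 9$ from $t\equiv 6\pmod 9$; both satisfy $3\Vert t$. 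The paper does one more reduction: writing $Z=3Z'$, $t=3s$ with $s\equiv 2\pmod 3$ gives $3(Z')^2-2sY^2=1$, and modulo $3$ this forces $Y^2\equiv 2\pmod 3$, impossible. You need both of these concrete computations; the abstract valuation/QR framing does not substitute for them.
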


%\begin{proof}
%If $(X,Y)$ is a solution of $\ptwelve$, then $X = 2Z$ for some $Z \in \IZ$ such that $(Z,Y)$ is a solution of $\pthree : Z^2 - 2tY^2 = 3$. Notice that $Z \equiv 1 \pmod{2}$, $Y \equiv 1 \pmod{2}$ and $t \equiv 1 \pmod{2}$, since $3$ is not a quadratic residue modulo $4$. By \cite[Theorem 110]{nagell}, if $\pthree$ is solvable it has one equivalence class of solutions if $t \equiv 0 \pmod{3}$, otherwise it has two classes of solutions, which are conjugate. Then one can easily see that the same holds for $\ptwelve$. 
%
%If $t \equiv 1 \pmod{3}$, write $t = 3q + 1$ for $q \in \IN_0$ and assume that $(Z,Y)$ is a solution of $\pthree$. Then $Z^2 - (6q+2)Y^2 = 3$, i.e.\ $Z^2 \equiv 2Y^2 \pmod{3}$. This implies that $Z \equiv Y \equiv 0 \pmod{3}$, 
%which gives a contradiction. Hence, $\ptwelve$ has no solutions.
%
%If $t \equiv 0 \pmod{9}$, write $t = 9q$ for $q \in \IN$. If $(X,Y)$ is a solution of $\ptwelve$, then $X = 6X'$ for some $X' \in \IN$ and $3(X')^2 - 6qY^2 = 1$, which is impossible. Hence, $\ptwelve$ has no solutions.
%
%If $t \equiv 6 \pmod{9}$, write $t = 9q + 6$ for $q \in \IN_0$. If $(X,Y)$ is a solution of $\ptwelve$, then $X = 3X'$ for some $X' \in \IN$ and $3(X')^2 - 8(3q+2)Y^2 = 4$. This implies $Y^2 \equiv -1 \pmod{3}$, which cannot be. Hence, $\ptwelve$ has no solutions.
%\end{proof}

All cases in Lemma \ref{lemma: pnine} and Lemma \ref{lemma: ptwelve} occur, for suitable values of $t$.

\providecommand{\bysame}{\leavevmode\hbox to3em{\hrulefill}\thinspace}
\providecommand{\MR}{\relax\ifhmode\unskip\space\fi MR }
% \MRhref is called by the amsart/book/proc definition of \MR.
\providecommand{\MRhref}[2]{%
  \href{http://www.ams.org/mathscinet-getitem?mr=#1}{#2}
}
\providecommand{\href}[2]{#2}

\end{document}